\documentclass[reqno]{amsart}
\usepackage{color}
\usepackage{dsfont}
\usepackage{amsfonts,latexsym,amssymb,amscd,amsxtra}
\usepackage{ifthen}
\usepackage{graphicx,bm}

\newtheorem{thm}{Theorem}[section]
\newtheorem{lem}{Lemma}[section]
\newtheorem{cor}{Corollary}
\theoremstyle{definition}
\newtheorem{definition}[thm]{Definition}

\newtheorem{prop}{Proposition}[section]
\newtheorem{remark}[thm]{Remark}
\newtheorem{teo}{Theorem}
\newtheorem{problem}{Problem}
\newtheorem{theorem}{Theorem}
\newtheorem{lemma}{Lemma}

\newtheorem{exe}{Exercise}
\newtheorem{exa}{Example}

\newtheorem{question}{Question}

\newtheorem{conjecture}{Conjecture}

\newcommand{\blem}{\begin{lemma}}
\newcommand{\elem}{\end{lemma}}
\newcommand{\bexer}{\begin{exe}}
\newcommand{\eexer}{\end{exe}}
\newcommand{\beq}{\begin{eqnarray}}
\newcommand{\eeq}{\end{eqnarray}}
\newcommand{\bthm}{\begin{theorem}}
\newcommand{\ethm}{\end{theorem}}
\newcommand{\beg}{\begin{exa}}
\newcommand{\eeg}{\end{exa}}

\newcommand{\bdefe}{\begin{definition}}
\newcommand{\edefe}{\end{definition}}

\newcommand{\bprop}{\begin{prop}}
\newcommand{\eprop}{\end{prop}}

\newcommand{\bpf}{\begin{proof}}
\newcommand{\epf}{\end{proof}}

\def\be{\begin{equation}}
\def\ee{\end{equation}}
\newtheorem{cl}{Claim}
\newcommand{\bcl}{\begin{cl}}
\newcommand{\ecl}{\end{cl}}


\newcommand{\B}{\mathbb{B}}
\newcommand{\R}{\mathbb{R}}




\newcommand{\bquess}{\begin{questions}}
\newcommand{\equess}{\end{questions}}
\newcommand{\br}{\begin{remark}}
\newcommand{\er}{\end{remark}}
\newcommand{\brs}{\begin{remarks}}
\newcommand{\ers}{\end{remarks}}

\newcommand{\bn}{\begin{nonsec}}
\newcommand{\en}{\end{nonsec}}

\newcommand{\ds}{\displaystyle}


\newcommand{\bcor}{\begin{cor}}
\newcommand{\ecor}{\end{cor}}
\newcommand{\bprob}{\begin{problem}}
\newcommand{\eprob}{\end{problem}}
\newcommand{\bcon}{\begin{conjecture}}
\newcommand{\econ}{\end{conjecture}}
\newcommand{\bques}{\begin{question}}
\newcommand{\eques}{\end{question}}

\newcommand{\begs}{\begin{examples}}
\newcommand{\eegs}{\end{examples}}

\newcommand{\bdefes}{\begin{definitions}}
\newcommand{\edefes}{\end{definitions}}

\newcommand{\ba}{\begin{array}}
\newcommand{\ea}{\end{array}}

\newcommand{\beqq}{\begin{eqnarray*}}
\newcommand{\eeqq}{\end{eqnarray*}}
\newcommand{\bee}{\begin{enumerate}}
\newcommand{\eee}{\end{enumerate}}

\newcommand{\bei}{\begin{itemize}}
\newcommand{\eei}{\end{itemize}}
\newcommand{\bed}{\begin{description}}
\newcommand{\eed}{\end{description}}

\newcommand{\bo}{\begin{obser}}
\newcommand{\eo}{\end{obser}}
\newcommand{\bos}{\begin{obsers}}
\newcommand{\eos}{\end{obsers}}

 \newcommand{\Sp}{{\mathbb S^{n-1}}}



\numberwithin{equation}{section}

\subjclass[2020]{31B05,31C05, 30C80} 
\begin{document}
\title[Schwarz-Pick Lemma]{Schwarz-Pick Lemma for Harmonic and Hyperbolic Harmonic Functions}
\author{Adel Khalfallah}
\address{A. Khalfallah, Department of Mathematics, King Fahd University of Petroleum and
	Minerals, Dhahran 31261, Saudi Arabia}\email{khelifa@kfupm.edu.sa}

\author{Bojana Purti\'c}
\address{B. Purti\'c, Faculty of mathematics, University of Belgrade, Studentski Trg 16, Belgrade, Republic of Serbia}
\email{bojanaj@matf.bg.ac.rs}

\author{ Miodrag Mateljevi\'c}
\address{M. Mateljevi\'c, Faculty of mathematics, University of Belgrade, Studentski Trg 16, Belgrade, Republic of Serbia}
\email{miodrag@matf.bg.ac.rs}

\begin{abstract} We establish some inequalities of Schwarz–Pick type for harmonic and hyperbolic harmonic functions on the unit ball of $\mathbb{R}^n$ and we disprove a recent conjecture of  Liu \cite{Liu2}.
\end{abstract}

\maketitle
\section{Introduction}
By    $\omega_n$ or $V({\B}^n)$ we denote the $n$-volume of the unit ball  $\mathbb{B}^n$ in $\mathbb{R}^n$, and by $\sigma_n$ the $(n-1)$-volume  of the unit sphere  $\Sp$; note that $\sigma_n=n\omega_n $. Next,  $\sigma$  denotes  the  rotation invariant Borel measure on $\Sp$,   $\sigma^0=\sigma/\sigma_n$    and $|.|$ is the Euclidean norm.
Thus  $\sigma^0$ is   the unique rotation invariant normalized  Borel measure on $\Sp$  such that  $\sigma^0(\Sp)=1$. In this paper, the  expressions  $\omega_{n-1}/ \omega_n$  and $\sigma_{n-1} / \sigma_n$ often appear so it is convenient to denote them by  $\omega_*(n)$  and $\sigma_*(n)$ respectively, that is,
$$\omega_*(n):=\frac{\omega_{n-1}}{ \omega_n}, \quad \mbox{ and }\quad \sigma_*(n):=\frac{\sigma_{n-1}}{ \sigma_n}.$$

Recall that a mapping $u \in \mathcal{C}^2(\B^n,\R)$ is said to be hyperbolic harmonic if 
$\Delta_h u=0,$
where $\Delta_h$ is the hyperbolic Laplacian operator defined by
$$\Delta_h u(x)= (1-|x|^2)^2 \Delta u+ 2(n-2)(1-|x|^2) \sum_{i=1}^n x_i \frac{\partial u}{\partial x_i}(x),  $$
here $\Delta$ denotes the Laplacian on $\R^n$. Clearly for $n=2$, hyperbolic harmonic and harmonic functions coincide.\\

In  \cite{liu}, Liu   proved the Khavinson conjecture,
which says for bounded harmonic functions on the unit ball of $\mathbb{R}^n$ the sharp constants
in the estimates for their radial derivatives and for their gradients coincide. 

\begin{teo}[\cite{liu}]
For $n\geq 3$, if $u$ is a bounded harmonic function on $\mathbb{B}^n$ into $\R$, then we have the following sharp inequality

$$|\nabla u(x)|\leq \frac{c_n}{1-|x|^2} \Phi_n(|x|) |u|_\infty, \, x\in \mathbb{B}^n,$$
with $c_n=(n-1)\omega_*(n),$ and 
$$\Phi_n(r)= \int_{-1}^1 \frac{\left|t-\frac{n-2}{2}r\right|(1-t^2)^{\frac{n-3}{2}} }{(1-2tr+r^2)^{\frac{n-2}{2}}} \, dt.$$
\end{teo}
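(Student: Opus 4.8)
The plan is to reduce the gradient estimate to a Poisson-kernel computation, following the circle of ideas behind the classical Schwarz--Pick lemma for harmonic functions. First I would recall that a bounded harmonic function on $\B^n$ is the Poisson integral of its boundary values $f \in L^\infty(\Sp)$, so that
\[
u(x) = \int_{\Sp} P(x,\zeta)\, f(\zeta)\, d\sigma^0(\zeta), \qquad P(x,\zeta) = \frac{1-|x|^2}{|x-\zeta|^n}.
\]
Differentiating under the integral sign gives $\nabla u(x) = \int_{\Sp} \nabla_x P(x,\zeta)\, f(\zeta)\, d\sigma^0(\zeta)$, and since the Poisson kernel has mean value $1$, one can subtract a constant and obtain the pointwise bound
\[
|\nabla u(x)| \le |u|_\infty \int_{\Sp} |\nabla_x P(x,\zeta)|\, d\sigma^0(\zeta).
\]
The sharpness comes from choosing $f = \operatorname{sgn}(\langle \nabla_x P(x,\cdot), \xi\rangle)$ in the direction $\xi$ that maximizes the directional derivative; by rotational symmetry it suffices to take $x = r e_1$ with $0 \le r < 1$.

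Next I would carry out the kernel computation. Writing $x = re_1$ and splitting $\nabla_x P$ into its radial part (derivative in $r$) and its spherical part, one computes
\[
\nabla_x P(re_1,\zeta) = \frac{-2r}{|re_1-\zeta|^n}e_1 - n\,\frac{(1-r^2)(re_1-\zeta)}{|re_1-\zeta|^{n+2}}.
\]
The key point, and this is where Liu's argument becomes delicate, is to identify the optimal direction $\xi$: one must show that the supremum over unit vectors $\xi$ of $\int_{\Sp} |\langle \nabla_x P, \xi\rangle|\, d\sigma^0$ is attained at $\xi = e_1$, i.e.\ that the worst case for the full gradient coincides with the worst case for the radial derivative --- this is precisely the content of the Khavinson conjecture. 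Granting that, one parametrizes $\zeta \in \Sp$ by $t = \langle \zeta, e_1\rangle \in [-1,1]$, using that the induced measure of the slice $\{\langle\zeta,e_1\rangle = t\}$ contributes a factor $\sigma_{n-1}(1-t^2)^{(n-3)/2}\,dt$ (up to normalization by $\sigma_n$, which produces the constant $\sigma_*(n) = \sigma_{n-1}/\sigma_n$; note $\sigma_*(n) = (n-1)\omega_*(n)$ since $\sigma_k = k\omega_k$). After collecting terms, $|\langle \nabla_x P(re_1,\zeta), e_1\rangle|$ reduces, up to the factor $(1-r^2)/(1-2tr+r^2)^{n/2}$, to a multiple of $\bigl|t - \tfrac{n-2}{2}r\bigr|$, which is exactly the integrand in $\Phi_n$ after absorbing powers of $|re_1-\zeta|^2 = 1-2tr+r^2$.

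The main obstacle, and the part I would expect to occupy the bulk of a genuine proof, is the verification that the extremal direction is the radial one --- equivalently, the inequality
\[
\int_{\Sp} \bigl|\langle \nabla_x P(x,\zeta), \xi\rangle\bigr|\, d\sigma^0(\zeta) \le \int_{\Sp} \bigl|\langle \nabla_x P(x,\zeta), x/|x|\rangle\bigr|\, d\sigma^0(\zeta)
\]
for all unit $\xi$. This is genuinely the hard analytic core (the Khavinson conjecture), and a complete treatment requires the careful eigenfunction/subordination estimates of \cite{liu}; in a paper of the present scope one cites that result rather than reproving it. Everything else --- the Poisson representation, differentiation under the integral, the reduction to one variable, and the bookkeeping of the constants $c_n = (n-1)\omega_*(n)$ and the precise form of $\Phi_n$ --- is routine once the extremal direction is pinned down, and the sharpness is witnessed by the (boundary-value) sign function described above, realized as a limit of bounded harmonic functions.
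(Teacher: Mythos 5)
This statement is quoted by the paper from \cite{liu} and is not proved anywhere in the paper, so there is no internal proof to compare against; what you have written is a reconstruction of the strategy of Liu's original argument. Your outline gets the architecture right: Poisson representation, differentiation under the integral, the bound by $\int_{\Sp}|\langle\nabla_xP,\xi\rangle|\,d\sigma^0$, sharpness via the sign of the kernel, reduction to $x=re_1$ by rotational symmetry, and the slicing formula $d\sigma^0\mapsto\sigma_*(n)(1-t^2)^{(n-3)/2}\,dt$. You also correctly isolate where the real work lies: the identification of the radial direction as extremal is exactly the Khavinson conjecture, and it is the entire content of \cite{liu}. As a self-contained proof your proposal therefore has a genuine gap at precisely that step --- everything you present is the (routine) reduction surrounding it --- but since the paper itself imports the theorem by citation, deferring that step to \cite{liu} is the appropriate resolution here, provided you are explicit (as you are) that you are not proving it.

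Two smaller points. First, the identity you invoke for the constants is wrong: since $\sigma_k=k\omega_k$, one has $\sigma_*(n)=\sigma_{n-1}/\sigma_n=\tfrac{n-1}{n}\,\omega_*(n)$, not $(n-1)\omega_*(n)$; the stated constant is $c_n=(n-1)\omega_*(n)=n\,\sigma_*(n)$, so the extra factor of $n$ must come from the kernel (the exponent $n$ in $|x-\zeta|^{-n}$), and with your identity the bookkeeping would come out off by a factor of $n$. Second, your claim that $|\langle\nabla_xP(re_1,\zeta),e_1\rangle|$ reduces to a clean multiple of $|t-\tfrac{n-2}{2}r|$ is not literally what a direct computation of the numerator $-2r(1-2rt+r^2)-n(1-r^2)(r-t)$ gives; recovering the precise integrand of $\Phi_n$ requires a further manipulation (this is part of the derivation in \cite{liu} and in the Kresin--Maz'ya formula for the radial sharp constant), so that step should not be dismissed as pure absorption of powers of $1-2tr+r^2$.
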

 
 For more details and development regarding the Khavinson conjecture for harmonic functions, see \cite{Kha92,KM10a,KM10b,KM12,Mar17,Matkhal, Mel19}.\\

In  \cite{Liu2}, the author  further proved that, when $n\geq 4$, the function $\Phi_n$ is decreasing on [0,1], thus
$$ \max_{r\in [0,1]} \Phi_n(r)=\Phi_n(0)=\frac{2}{n-1}.$$
In contrast, if $n= 3$, then

$$\Phi_3(r)= \frac{2}{3} \frac{(1+\frac{1}{3}r^2)^{3/2}-1+r^2}{r^2}$$
is strictly increasing on $[0,1]$ and attains its maximum at $r= 1$, thus
$$ \max_{r\in [0,1]} \Phi_3(r)=\Phi_3(1)=\frac{16}{9\sqrt{3}}.$$

\begin{teo}[\cite{Liu2}, Schwarz-Pick lemma for harmonic functions]\label{ThLiu}
 Let $u$ be a real-valued bounded harmonic function on the unit ball $\mathbb{B}^n$
of $\mathbb{R}^n$.
\begin{enumerate}
    \item When $n = 2$ or   $n\geq  4$, the following sharp inequality holds:
\be \label{InqLiu1}
|\nabla u (x)|\leq 2\omega_*(n) \frac{|u|_{\infty}}{1-|x|^2},  \quad   x\in {\B}^n.
\ee

\noindent  Equality holds if and only
if $x = 0$   and  $u = U \circ T$ for some orthogonal transformation $T$, where $U$ is
the Poisson integral of the function that equals 1 on a hemisphere and $-1$
on the remaining hemisphere.
\item When $n = 3$, we have
\be
|\nabla u (x)|\leq   \frac{8}{3 \sqrt{3}} \frac{|u|_{\infty}}{1-|x|^2}, \quad   x\in {\B}^3 .
\ee
The constant  $\frac{8}{3 \sqrt{3}}$
here is the best possible.
\end{enumerate}
\end{teo}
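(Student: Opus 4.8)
The plan is to read off both inequalities from Liu's sharp pointwise estimate (Theorem~A above) together with the stated monotonicity of $\Phi_n$, and then to settle the equality and optimality claims by hand. Write $c_n=(n-1)\omega_*(n)$ as in Theorem~A and recall $\Phi_n(0)=\int_{-1}^1|t|(1-t^2)^{(n-3)/2}\,dt=\tfrac{2}{n-1}$. For $n\ge 4$, $\Phi_n$ is decreasing on $[0,1]$, hence $\Phi_n(|x|)\le\Phi_n(0)$ and Theorem~A gives
$$|\nabla u(x)|\le\frac{c_n}{1-|x|^2}\,\Phi_n(|x|)\,|u|_\infty\le\frac{(n-1)\omega_*(n)}{1-|x|^2}\cdot\frac{2}{n-1}\,|u|_\infty=\frac{2\omega_*(n)}{1-|x|^2}\,|u|_\infty,$$
which is \eqref{InqLiu1}. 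For $n=3$ we have $c_3=2\omega_*(3)=\tfrac32$ and $\Phi_3$ is strictly increasing on $[0,1]$, so $c_3\Phi_3(|x|)<c_3\Phi_3(1)=\tfrac32\cdot\tfrac{16}{9\sqrt3}=\tfrac{8}{3\sqrt3}$ for every $x\in\B^3$; Theorem~A then yields part~(2), in fact with strict inequality, so equality in part~(2) is never attained.

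The case $n=2$ is not covered by Theorem~A and is handled classically. Since harmonic functions on $\D$ are stable under precomposition with M\"obius automorphisms of $\D$, and $(1-|z|^2)|\nabla u(z)|$ is invariant under such precomposition, it suffices to bound $|\nabla u(0)|$; this $x=0$ estimate is available for every $n$, either as the $r=0$ instance of Theorem~A or directly from the Poisson representation $u=P[\phi]$, $\|\phi\|_\infty=|u|_\infty$: differentiating $P(x,\zeta)=(1-|x|^2)|x-\zeta|^{-n}$ at $x=0$ gives $\nabla_xP(0,\zeta)=n\zeta$, so with $e=\nabla u(0)/|\nabla u(0)|$,
$$|\nabla u(0)|=n\int_{\Sp}\langle\zeta,e\rangle\,\phi(\zeta)\,d\sigma^0(\zeta)\le n\,|u|_\infty\int_{\Sp}|\langle\zeta,e\rangle|\,d\sigma^0(\zeta)=\frac{2\omega_{n-1}}{\omega_n}\,|u|_\infty=2\omega_*(n)\,|u|_\infty,$$
using $\int_{\Sp}|\langle\zeta,e\rangle|\,d\sigma=2\omega_{n-1}$ (a one-line flux computation) and $\sigma_n=n\omega_n$. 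This closes the $n=2$ case and, as a bonus, exhibits the equality mechanism at the centre.

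For the rigidity statement when $n\ge 4$: if equality holds in \eqref{InqLiu1} at some $x$, then tracing equality through Theorem~A forces $\Phi_n(|x|)=\Phi_n(0)$, hence $x=0$ since $\Phi_n$ is strictly decreasing, and then equality at $x=0$ in the displayed chain. That chain is an $L^1$--$L^\infty$ duality inequality, and equality in it forces $\phi=|u|_\infty\,\mathrm{sign}\langle\zeta,e\rangle$ for $\sigma$-a.e.\ $\zeta$ (the equator $\langle\zeta,e\rangle=0$ being $\sigma$-null). After an orthogonal change of variables $T$ sending $e$ to a fixed axis, $\phi$ is exactly the datum equal to $1$ on a hemisphere and $-1$ on the complementary one, so $u=U\circ T$; conversely this $u$ realizes equality, since the same computation gives $|\nabla U(0)|=2\omega_*(n)|U|_\infty$. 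For part~(2), the optimality of $\tfrac{8}{3\sqrt3}$ follows from the sharpness of Theorem~A: the best constant $K$ in an estimate $(1-|x|^2)|\nabla u(x)|\le K|u|_\infty$ on $\B^3$ equals $c_3\sup_{r\in[0,1)}\Phi_3(r)=c_3\Phi_3(1)=\tfrac{8}{3\sqrt3}$, a value approached as $|x|\to1$ but not attained.

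The inequalities are immediate once Theorem~A and the monotonicity of $\Phi_n$ are in hand (plus the classical planar estimate for $n=2$); the real work is at the two endpoints. For $n\ge4$ it is the duality argument above, which also requires $\Phi_n$ to be \emph{strictly} decreasing --- a refinement to be extracted carefully from the monotonicity in \cite{Liu2}. The delicate point, and the one I expect to be the main obstacle, is the equality case for $n=2$: there conformal invariance propagates equality along entire geodesics --- the extremal $U$ is invariant under a one-parameter group of M\"obius automorphisms of $\D$, so $(1-|z|^2)|\nabla U(z)|$ is constant on a whole geodesic through the origin --- so the set where equality holds is strictly larger than $\{x=0,\ u=U\circ T\}$, and the $n=2$ statement must be read, or reformulated, with this in mind (for instance as $u=U\circ\varphi$ for some $\varphi\in\Aut(\D)$, with $x=\varphi(0)$). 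Reconciling the $n=2$ and $n\ge4$ rigidity statements is where the argument needs the most care.
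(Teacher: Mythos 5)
This theorem is quoted from \cite{Liu2} as background; the paper under review gives no proof of it, so there is nothing internal to compare your argument against. Your reconstruction is, however, essentially the derivation in the source: Theorem~A plus the stated monotonicity of $\Phi_n$ yields \eqref{InqLiu1} for $n\ge 4$ and the constant $c_3\Phi_3(1)=\tfrac{8}{3\sqrt3}$ for $n=3$ exactly as you write, and your numerics ($c_3=\tfrac32$, $\Phi_n(0)=\tfrac{2}{n-1}$, $\int_{\Sp}|\langle\zeta,e\rangle|\,d\sigma=2\omega_{n-1}$, hence $2\omega_*(2)=\tfrac{4}{\pi}$) all check out. The $n=2$ case via the invariance of $(1-|z|^2)|\nabla u(z)|$ under precomposition with $\Aut(\D)$ plus the origin estimate is the standard route and is correct. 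Two caveats, both of which you already flag. First, the rigidity for $n\ge4$ genuinely needs \emph{strict} decrease of $\Phi_n$, which is not stated in the paper's summary of \cite{Liu2}; as written your argument only rules out $x\ne0$ conditionally on that refinement. Second, your suspicion about $n=2$ is correct and worth stating more forcefully: the equality characterization as transcribed here is false for $n=2$. For $U(z)=\tfrac{2}{\pi}\arg\tfrac{1+z}{1-z}$ one computes $|\nabla U(z)|=\tfrac{4}{\pi|1-z^2|}$, so $(1-r^2)|\nabla U(r)|=\tfrac{4}{\pi}$ for every $r\in(-1,1)$; equality holds along the whole diameter, not only at $x=0$ (equivalently, $U$ is invariant under the hyperbolic flow fixing $\pm1$, so the extremal configuration propagates along a geodesic). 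So the ``if and only if $x=0$'' clause should be read as pertaining to $n\ge4$ only, or reformulated for $n=2$ modulo $\Aut(\D)$ as you suggest. This is a defect of the statement as quoted, not of your proof of the inequalities, which is sound.
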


\br \hfill
\begin{enumerate}
    \item Note  that  the inequality (\ref{InqLiu1}) holds  when $n = 3$ at $x=0$.
Curiously, the inequality (\ref{InqLiu1}) fails when $n = 3$ in general. Note that  $\frac{8}{3 \sqrt{3}}\approx 1.5396,$
while the constant $\frac{2 V({\B}^{n-1})}{V({\B}^n)}$ in (\ref{InqLiu1})  equals to  $\frac{3}{2}$  when $n = 3$.
\item The inequality (\ref{InqLiu1}) at $x=0$ was previously proved in \cite[Theorem 6.26]{Axler1992} and in   \cite[Corollary 1]{Burgeth92} for harmonic functions fixing the origin.
\end{enumerate}
\er

The classical Schwarz-Pick lemma states that an analytic function of $\mathbb{D}$ into itself satisfies
$$|f'(z)|\leq \frac{1-|f(z)|^2}{1-|z|^2}, z\in \mathbb{D}, $$
where $\mathbb{D}$ denotes the unit disc of the complex plane $\mathbb{C}$.
For complex-valued harmonic function of $\mathbb{D}$ into itself, Colonna \cite{Col89} proved the following sharp Schwarz-Pick lemma:

$$|Df(z)| \leq \frac{4}{\pi} \frac{1}{1-|z|^2}, z\in \mathbb{D},
$$
where $|Df(z)|= \left|\frac{\partial f(z)}{\partial z}\right| +\left|\frac{\partial f(z)}{\partial \overline{z}}\right|$.\\

In the planar case, Kalaj and
Vuorinen \cite{kavu} obtained the following  inequality for real  harmonic functions with values in $(-1,1)$.
\be\label{Schw-Pick}
|\nabla u (z)|\leq \frac{4}{\pi}  \frac{1-|u(z)|^2}{1-|z|^2},\quad |z|<1.
\ee

\noindent Based on (\ref{InqLiu1}) and  (\ref{Schw-Pick}),  Liu  suggested the following conjecture.

\bcon[\cite{Liu2}]\label{ConL}
If $n\geq 4$ and $u:\mathbb{B}^n\rightarrow (-1,1)$  is a harmonic function, then
\be\label{liuconj}
|\nabla u (x)|\leq 2 \omega_*(n)  \frac{1-u^2(x)}{1-|x|^2},\quad |x|<1.
\ee
\econ

First, by providing a counter-example, we  {\it disprove} Conjecture \ref{ConL}  for $n\geq 4$. Our main tool is  Theorem \ref{KMM1} giving a sharp estimate of the norm of the gradient at zero of functions having generalized Poisson transformations; such estimate is based on the Burgeth's method, see \cite{Burgeth92,Burgeth1994}.\\

Let us introduce some notations. 
 If $x\in \B^n\setminus \{0\}$, define $\hat{x}=\frac{x}{|x|} \in \Sp$ and $\hat{0}=e_n=(0,\ldots,1)$, the north pole. $S(\hat{x},\gamma)$
 denotes the hyperspherical cap with center $\hat{x}$ and contact angle $\gamma\in [0,\pi]$:
 $$S(\hat{x},\gamma)=\{y\in \Sp\, :\,  \, \langle \hat{x},y \rangle >\cos \gamma  \}. $$

For $\alpha,\beta \in \mathbb{R}$,  $\beta>0$, the generalized Poisson kernel is defined by 
\begin{equation*}
P_{\alpha,\beta}(x,y)=\frac{(1-|x|^2)^\alpha}{|x-y|^{2\beta}}, \, x\in \B^n \mbox{ and } y\in \Sp.
\end{equation*}

\noindent For $f\in L^1(\Sp)$, set
$$P_{\alpha,\beta}[f](x)= \int_{\Sp} P_{\alpha,\beta}(x,y)f(y)\,d\sigma^0(y). $$

By  $A=A_n^{cap}(\gamma)$  we denote  the normalized $(n-1)$-dimensional volume of the spherical cap  with  contact angle $\gamma$.

\begin{teo} [\cite{KhMaMh}]\label{KMM1}
Let $h^*: \Sp \to [-1,1]$ be a bounded function on  $\Sp$ with values in $[-1,1]$ and $h=P_{\alpha,\beta}[h^*]$. Then,

\begin{equation}\label{estgrad}
|\nabla h(0)| \leq D_n(\gamma,\beta):=\frac{4\beta}{n}\, \frac{\omega_{n-1}}{\omega_n} (\sin \gamma)^{n-1},
\end{equation}
where $\gamma$ is the unique angle in $ [0,\pi]$ such that
\be\label{constraint}
A=A_n^{cap}(\gamma)=\frac{1+h(0)}{2}.
\ee

The estimate (\ref{estgrad}) is sharp and 
\be\label{countex}
h^0_{\alpha,\beta}:=P_{\alpha,\beta}\left[\mathds{1}_{S(e_n,\gamma)}-\mathds{1}_{S^c(e_n,\gamma)}\right]
\ee
is an  extremal function,
 where $S^c(e_n,\gamma)=\Sp \setminus S(e_n,\gamma)$. 
\end{teo}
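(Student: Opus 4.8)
The plan is to compute the gradient $\nabla h(0)$ explicitly and then optimize over admissible boundary data. First I would differentiate the Poisson-type integral under the integral sign: since $P_{\alpha,\beta}(x,y) = (1-|x|^2)^\alpha |x-y|^{-2\beta}$, the term $(1-|x|^2)^\alpha$ contributes nothing to $\nabla_x$ at $x=0$ (its gradient vanishes there), so only $\nabla_x |x-y|^{-2\beta}$ survives. A direct computation gives $\nabla_x |x-y|^{-2\beta}\big|_{x=0} = 2\beta\, y$ for $y\in\Sp$ (using $|y|=1$). Hence
\[
\nabla h(0) = 2\beta \int_{\Sp} y\, h^*(y)\, d\sigma^0(y).
\]
So the problem reduces to: maximize $\left|\int_{\Sp} y\, h^*(y)\, d\sigma^0(y)\right|$ over all measurable $h^*:\Sp\to[-1,1]$ subject to the linear constraint $\int_{\Sp} h^*(y)\, d\sigma^0(y) = h(0)$ (which follows from $P_{\alpha,\beta}[1](0)=1$, i.e.\ $\int_{\Sp} d\sigma^0 = 1$).

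The second step is a bathtub-principle / rearrangement argument. By rotational symmetry we may assume the maximizing vector points in the direction $e_n$, so we want to maximize $\int_{\Sp} \langle y, e_n\rangle\, h^*(y)\, d\sigma^0(y)$ subject to the mass constraint. For fixed total mass $\int h^* = h(0)$ and the pointwise bound $|h^*|\le 1$, the linear functional $\int \langle y,e_n\rangle h^*$ is maximized by pushing $h^*$ to $+1$ where $\langle y,e_n\rangle$ is largest and to $-1$ where it is smallest; that is, $h^* = \mathds{1}_{S(e_n,\gamma)} - \mathds{1}_{S^c(e_n,\gamma)}$ for the unique cap whose normalized volume $A_n^{cap}(\gamma)$ realizes the mass constraint — solving $A - (1-A) = h(0)$ gives exactly $A_n^{cap}(\gamma) = \tfrac{1+h(0)}{2}$, which is (\ref{constraint}). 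This simultaneously identifies the extremal function (\ref{countex}) and reduces the sharp constant to the single integral $\int_{S(e_n,\gamma)} \langle y,e_n\rangle\, d\sigma^0(y) - \int_{S^c(e_n,\gamma)} \langle y,e_n\rangle\, d\sigma^0(y)$.

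The third step is to evaluate that integral and match it to $D_n(\gamma,\beta)$. Parametrizing $\Sp$ by the height coordinate $t=\langle y,e_n\rangle\in[-1,1]$, the measure $d\sigma^0$ has density proportional to $(1-t^2)^{(n-3)/2}$ with normalizing constant $\sigma_*(n) = \sigma_{n-1}/\sigma_n = \omega_{n-1}/(n\omega_n)$ (after the standard reduction $\sigma_n = n\omega_n$). The antiderivative of $t(1-t^2)^{(n-3)/2}$ is $-\tfrac{1}{n-1}(1-t^2)^{(n-1)/2}$, so each cap integral telescopes to a boundary term at $t=\cos\gamma$, giving $(1-\cos^2\gamma)^{(n-1)/2} = (\sin\gamma)^{n-1}$ up to the constant. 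Collecting the factors $2\beta$, the normalization $\tfrac{1}{n}\tfrac{\omega_{n-1}}{\omega_n}$, and a factor $2$ from combining the two caps yields exactly $D_n(\gamma,\beta) = \tfrac{4\beta}{n}\tfrac{\omega_{n-1}}{\omega_n}(\sin\gamma)^{n-1}$, and sharpness is automatic because the bathtub optimizer is attained.

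The main obstacle is the optimization step: one must argue rigorously that among all $h^*$ with values in $[-1,1]$ and prescribed average, the cap indicator-difference is genuinely optimal, and — more delicately — that the constraint is the \emph{right} one, i.e.\ that $h(0) = \int h^*\, d\sigma^0$ and nothing more about $h^*$ enters $\nabla h(0)$. The first point is the classical bathtub principle (a short convexity/exchange argument), but care is needed because we are maximizing a signed quantity $|\nabla h(0)|$; reducing to a fixed direction via rotational invariance of $\sigma^0$ handles this cleanly. One should also check the degenerate endpoints $h(0)=\pm 1$ (forcing $h^*\equiv\pm 1$ a.e., $\gamma\in\{0,\pi\}$, and $\nabla h(0)=0$), which are consistent with $(\sin\gamma)^{n-1}=0$. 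The remaining computations are the routine Beta-integral manipulations sketched above.
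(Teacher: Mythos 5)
The paper does not actually prove Theorem \ref{KMM1}: it is quoted from \cite{KhMaMh}, with the remark that the estimate is obtained by Burgeth's method \cite{Burgeth92,Burgeth1994}. Your argument is precisely that method --- the moment formula $\nabla h(0)=2\beta\int_{\Sp}y\,h^*(y)\,d\sigma^0(y)$, the reduction by rotational invariance to a fixed direction, the bathtub principle under the mass constraint $\int_{\Sp}h^*\,d\sigma^0=h(0)$, and the explicit cap integral --- and it is correct as a self-contained proof, including the identification of the extremizer (\ref{countex}) and the degenerate cases $h(0)=\pm1$. One bookkeeping slip: the normalizing constant of the height density is $\sigma_{n-1}/\sigma_n=(n-1)\omega_{n-1}/(n\omega_n)$, not $\omega_{n-1}/(n\omega_n)$; this extra factor $n-1$ is exactly cancelled by the factor $1/(n-1)$ from the antiderivative of $t(1-t^2)^{(n-3)/2}$, which you omitted when ``collecting the factors,'' so the two errors cancel and the final constant $D_n(\gamma,\beta)=\frac{4\beta}{n}\,\omega_*(n)(\sin\gamma)^{n-1}$ is correct.
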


\noindent It is readable that  $$D_n(\gamma,\beta)=\sup |\nabla h(0)|,$$ where the  supremum is taken over  all functions $h$ which satisfy the assumptions of Theorem \ref{KMM1} with the constraint $h(0)=a$, where   $\gamma$ is determined by  $A_n^{cap}(\gamma)=\frac{1+a}{2}$.\\

As a corollary, we obtain an estimate of the gradient at zero for harmonic functions obtained for  $(\alpha,\beta)=(1,\frac{n}{2})$ and hyperbolic-harmonic functions obtained for $(\alpha,\beta)= (n-1,n-1)$  in terms of their values at the origin.

\begin{cor}[\cite{KhMaMh}]\label{cor1}
Let $h:\B^n \to (-1,1)$  be a harmonic or hyperbolic harmonic function. Then the following  sharp estimates hold:

$$
|\nabla h(0)| \leq  \left.
\begin{cases}
2 \omega_*(n) (\sin \gamma)^{n-1} & \text{ if } h \text{ is harmonic,} \\
\ds 4 \sigma_*(n) (\sin \gamma)^{n-1} & \text{ if } h \text{ is hyperbolic-harmonic,}
\end{cases}
\right.
$$
where $\gamma$ is the unique angle in $ [0,\pi]$ such that
$$
A_n^{cap}(\gamma)=\frac{1+h(0)}{2}.
$$
These estimates are sharp and $h^0_{1,\frac{n}{2}}$ (resp., $h^0_{n-1,n-1}$)  is an  extremal harmonic (resp., hyperbolic-harmonic) function  on $\B^n$, see (\ref{countex}).
\end{cor}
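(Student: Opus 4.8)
The plan is to obtain Corollary \ref{cor1} as a direct specialization of Theorem \ref{KMM1}; the only input beyond Theorem \ref{KMM1} is the Poisson-type integral representation of bounded harmonic and hyperbolic harmonic functions on the ball, together with the elementary identity $\sigma_k=k\omega_k$.

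\textbf{Step 1: identify the kernels and represent $h$.} The classical Poisson kernel of $\B^n$ is $(1-|x|^2)/|x-y|^{n}=P_{1,n/2}(x,y)$, while the Poisson kernel for $\Delta_h$ on the ball model is $\big((1-|x|^2)/|x-y|^{2}\big)^{n-1}=P_{n-1,n-1}(x,y)$; in both cases $P_{\alpha,\beta}[1]\equiv 1$. If $h:\B^n\to(-1,1)$ is a bounded harmonic function, then $h=P_{1,n/2}[h^*]$ for its boundary trace $h^*\in L^\infty(\Sp)$, and by the Fatou-type radial boundary convergence $|h^*|\le 1$ holds $\sigma$-a.e., so $h^*$ may be taken with values in $[-1,1]$; the analogous representation for bounded hyperbolic harmonic $h:\B^n\to(-1,1)$ gives $h=P_{n-1,n-1}[h^*]$ with $|h^*|\le 1$ $\sigma$-a.e. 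Since $P_{\alpha,\beta}(0,y)\equiv 1$, in either case $h(0)=\int_{\Sp}h^*\,d\sigma^0\in(-1,1)$.

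\textbf{Step 2: apply Theorem \ref{KMM1} and simplify the constant.} In each case $h=P_{\alpha,\beta}[h^*]$ satisfies the hypotheses of Theorem \ref{KMM1}. Because $\gamma\mapsto A_n^{cap}(\gamma)$ is continuous and strictly increasing from $0$ to $1$, there is a unique $\gamma\in[0,\pi]$ with $A_n^{cap}(\gamma)=(1+h(0))/2\in(0,1)$, and (\ref{estgrad}) gives $|\nabla h(0)|\le D_n(\gamma,\beta)=\tfrac{4\beta}{n}\,\omega_*(n)\,(\sin\gamma)^{n-1}$. For the harmonic case $\beta=n/2$, whence $D_n(\gamma,n/2)=2\,\omega_*(n)\,(\sin\gamma)^{n-1}$. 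For the hyperbolic harmonic case $\beta=n-1$, whence $D_n(\gamma,n-1)=\tfrac{4(n-1)}{n}\,\omega_*(n)\,(\sin\gamma)^{n-1}$; using $\sigma_k=k\omega_k$ we get $\tfrac{n-1}{n}\,\omega_*(n)=\tfrac{(n-1)\omega_{n-1}}{n\omega_n}=\tfrac{\sigma_{n-1}}{\sigma_n}=\sigma_*(n)$, so $D_n(\gamma,n-1)=4\,\sigma_*(n)\,(\sin\gamma)^{n-1}$. This is exactly the pair of displayed estimates.

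\textbf{Step 3: sharpness and the main obstacle.} By the last assertion of Theorem \ref{KMM1}, $h^0_{\alpha,\beta}=P_{\alpha,\beta}\big[\mathds{1}_{S(e_n,\gamma)}-\mathds{1}_{S^c(e_n,\gamma)}\big]$ attains equality in (\ref{estgrad}). For $(\alpha,\beta)=(1,n/2)$ it is the Poisson integral of a $\{\pm1\}$-valued boundary function, hence harmonic with values in $(-1,1)$ (the kernel $P_{1,n/2}(\cdot,y)$ being harmonic and strictly positive in the interior); for $(\alpha,\beta)=(n-1,n-1)$ it is the corresponding hyperbolic Poisson integral, hence hyperbolic harmonic with values in $(-1,1)$. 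Moreover $h^0_{\alpha,\beta}(0)=\sigma^0(S(e_n,\gamma))-\sigma^0(S^c(e_n,\gamma))=2A_n^{cap}(\gamma)-1$, so the constraint $A_n^{cap}(\gamma)=(1+h^0_{\alpha,\beta}(0))/2$ is met and equality is realized inside the admissible class. The only non-formal point is Step 1: for harmonic functions the representation with boundary data bounded by $1$ is classical, but for hyperbolic harmonic functions one must invoke solvability of the Dirichlet problem for $\Delta_h$ on $\B^n$, the corresponding Fatou boundary behavior, and the normalization $P_{n-1,n-1}[1]\equiv1$; granting these, the corollary is a one-line substitution of $\beta=n/2$ and $\beta=n-1$ into Theorem \ref{KMM1}.
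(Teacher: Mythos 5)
Your proposal is correct and follows exactly the route the paper intends: Corollary \ref{cor1} is obtained by representing bounded (hyperbolic) harmonic functions as $P_{1,n/2}[h^*]$ resp.\ $P_{n-1,n-1}[h^*]$ with $|h^*|\le 1$ and substituting $\beta=n/2$ resp.\ $\beta=n-1$ into the bound $D_n(\gamma,\beta)=\frac{4\beta}{n}\omega_*(n)(\sin\gamma)^{n-1}$ of Theorem \ref{KMM1}, using $\sigma_*(n)=\frac{n-1}{n}\omega_*(n)$ to rewrite the hyperbolic constant. Your arithmetic and your identification of the kernels are both correct, and your explicit attention to the Fatou-type representation step (which the paper leaves implicit, citing \cite{KhMaMh}) is a welcome addition rather than a deviation.
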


\section {Main results}

\noindent We are now in a position to establish our main results. Our first result is the following.
\begin{thm}\label{thm12}
For $n\geq 4$, the harmonic function $h^0_{1,\frac{n}{2}}$  defined in (\ref{countex}) provides a counter-example to Conjecture \ref{ConL}.
\end{thm}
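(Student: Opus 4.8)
The plan is to test the conjectured inequality (\ref{liuconj}) against the candidate $u=h:=h^0_{1,n/2}$ at the single point $x=0$, regarding the contact angle $\gamma\in(0,\pi)$ as a free parameter. First I would collect three facts about $h$, all immediate from the material already stated. Since $h$ is the Poisson integral of a function valued in $\{-1,1\}$ and is non-constant, the maximum principle gives $h(\B^n)\subset(-1,1)$, so $h$ is an admissible competitor in Conjecture \ref{ConL}. Since $P_{1,n/2}(0,y)\equiv 1$ on $\Sp$, the Poisson transform at the origin is just an average, so
\[
h(0)=\sigma^0\bigl(S(e_n,\gamma)\bigr)-\sigma^0\bigl(S^c(e_n,\gamma)\bigr)=2A_n^{cap}(\gamma)-1 .
\]
And since $h$ is the extremal function of Theorem \ref{KMM1} (equivalently of Corollary \ref{cor1}), equality holds in the gradient estimate at $0$: $|\nabla h(0)|=2\omega_*(n)(\sin\gamma)^{n-1}$.

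Next I would substitute these into (\ref{liuconj}) at $x=0$ and divide by the positive constant $2\omega_*(n)$: the conjecture would force the scalar inequality
\[
(\sin\gamma)^{n-1}\le 1-\bigl(2A_n^{cap}(\gamma)-1\bigr)^2=4\,A_n^{cap}(\gamma)\bigl(1-A_n^{cap}(\gamma)\bigr)
\qquad\text{for every }\gamma\in(0,\pi).
\]
So it is enough to produce one $\gamma$ for which this fails. I would examine $\Psi(\gamma):=4A_n^{cap}(\gamma)\bigl(1-A_n^{cap}(\gamma)\bigr)-(\sin\gamma)^{n-1}$ near $\gamma=\pi/2$, using $\tfrac{d}{d\gamma}A_n^{cap}(\gamma)=\sigma_*(n)(\sin\gamma)^{n-2}$ and $A_n^{cap}(\pi/2)=\tfrac12$. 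A short differentiation gives $\Psi(\pi/2)=0$, $\Psi'(\pi/2)=0$ (also clear from the symmetry $\Psi(\pi-\gamma)=\Psi(\gamma)$), and
\[
\Psi''(\pi/2)=(n-1)-8\,\sigma_*(n)^2 .
\]
If this is negative, then $\gamma=\pi/2$ is a strict local maximum of $\Psi$ with value $0$, hence $\Psi(\gamma)<0$ for $\gamma$ close to but distinct from $\pi/2$, and the corresponding $h^0_{1,n/2}$ violates (\ref{liuconj}) at the origin, proving the theorem.

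The one genuine computation is thus the scalar inequality $8\,\sigma_*(n)^2>n-1$ for all $n\ge 4$, which I expect to be the main obstacle. Writing $\sigma_*(n)=\sigma_{n-1}/\sigma_n=\pi^{-1/2}\Gamma(n/2)/\Gamma((n-1)/2)$, one has the recursion $\sigma_*(n)\,\sigma_*(n-1)=(n-2)/(2\pi)$, and log-convexity of $\Gamma$ shows that $\sigma_*$ is strictly increasing; hence $\sigma_*(n)^2>\sigma_*(n)\sigma_*(n-1)=(n-2)/(2\pi)$, so $8\sigma_*(n)^2>4(n-2)/\pi$, which exceeds $n-1$ once $(4-\pi)n>8-\pi$, i.e. for $n\ge 6$, while the remaining cases $n=4$ and $n=5$ are immediate from $\sigma_*(4)=2/\pi$ and $\sigma_*(5)=3/4$. (A one-shot alternative is Kershaw's inequality $\Gamma(x+1)/\Gamma(x+\tfrac12)>(x+\tfrac14)^{1/2}$ with $x=(n-2)/2$, which yields $8\sigma_*(n)^2>(4n-6)/\pi>n-1$ uniformly for $n\ge 4$.) A conceptual point worth flagging: the counterexamples are the extremal functions whose contact angle is close to, but not equal to, $\pi/2$, i.e. whose value at the origin is close to but not equal to $0$; at the two ends $\gamma\to0,\pi$ (where $h(0)\to\pm1$) the inequality of the conjecture actually does hold, so a cruder choice of test function would not detect the failure.
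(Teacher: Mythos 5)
Your main argument is correct, and it shares the paper's overall strategy: test the conjectured inequality at $x=0$ against the extremal function of Theorem \ref{KMM1}, so that everything reduces to the scalar question of whether $(\sin\gamma)^{n-1}\le 4A_n^{cap}(\gamma)\bigl(1-A_n^{cap}(\gamma)\bigr)$ for all $\gamma\in(0,\pi)$. Where you diverge is in \emph{which} $\gamma$ you use to refute it. You expand at $\gamma=\pi/2$ (i.e.\ $h(0)$ near $0$) and conclude from $\Psi''(\pi/2)=(n-1)-8\sigma_*(n)^2<0$, which is exactly the lower bound $\sigma_*(n)>\sqrt{(n-1)/8}$ that the paper records in (\ref{eq111}) via the Borgwardt/Alzer estimates; your direct verification of $8\sigma_*(n)^2>n-1$ for $n\ge 4$ is sound. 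The paper instead lets $\gamma\to 0^+$ (i.e.\ $h(0)\to -1$): writing $D_n(\gamma,n/2)=\tfrac n2\,C_n(\gamma)\,(1-h(0)^2)$ with $C_n(\gamma)\to 1$, the conjecture would force $n\le 4\omega_*(n)$, which fails for $n=5$ simply because $\omega_*(5)<1$, and for all $n\ge4$ by the upper bound in (\ref{eq111}). So the two proofs use complementary halves of the same lemma; the paper's Theorem \ref{inq1} moreover shows the violation is global, namely $(\sin\gamma_a)^{n-1}>1-a^2$ for every $a\in(-1,1)\setminus\{0\}$, not just near $a=0$.

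One genuine error, though it sits only in your closing remark and does not affect the proof: it is \emph{not} true that the conjectured inequality holds for the extremal functions with $\gamma$ near $0$ or $\pi$. Since $A_n^{cap}(\gamma)\sim \sigma_*(n)\gamma^{n-1}/(n-1)$ as $\gamma\to 0^+$, one gets
$(\sin\gamma)^{n-1}\big/\bigl(4A_n^{cap}(\gamma)(1-A_n^{cap}(\gamma))\bigr)\to (n-1)/(4\sigma_*(n))>1$
for $n\ge 4$, so the ``crude'' test functions with small $\gamma$ do detect the failure --- indeed this limit is precisely the one the paper exploits. The inequality of the conjecture holds for the extremal family only at the three values $h(0)\in\{-1,0,1\}$.
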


The proof is based on Corollary \ref{cor1} and some basic properties of the volume of the unit ball in $\R^n$. Recently, several authors presented interesting monotonicity
properties of the $\omega_n$, the volume of the unit ball in $\R^n$. The sequence itself is not monotonic and attains its maximum at $n=5$.  It is worth noting that    $\omega_*(n)=\frac{\omega_{n-1}}{\omega_n}$ has very interesting properties and 
there are   remarkable upper and lower bounds, see for instance  Borgwardt \cite[p. 253 ]{Borg}   and Alzer  \cite{Alz}. Using these estimates, we prove  the following.

\begin{thm}\label{inq1}
For $n\geq 4$ and $a\in [-1,1]$. Then the following inequalities hold 
\be\label{ineqsin}
(\sin \gamma_a)^{n-1}\geq 1-a^2.
\ee
Moreover, the equality holds for $a=-1,0,1$.
\be\label{inq2}
(\sin \gamma_a)^{n-1}\leq\frac{n-1}{4 \sigma_*(n)} (1-a^2).
\ee
Moreover, the equality holds for $a=-1,1$.\\
 \noindent $\gamma_a$ is the unique angle in $[0,\pi]$ such that 
$A_n^{cap}(\gamma_a)=\frac{1+a}{2}.
$

\end{thm}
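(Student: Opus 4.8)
The plan is to use $\gamma$ rather than $a$ as the running variable. Write $a=a(\gamma):=2A_n^{cap}(\gamma)-1$, so that $\gamma\mapsto a(\gamma)$ is the inverse of $a\mapsto\gamma_a$. Since $A_n^{cap}(\gamma)=\sigma_*(n)\int_0^{\gamma}(\sin\theta)^{n-2}\,d\theta$, this map is a strictly increasing bijection of $[0,\pi]$ onto $[-1,1]$ with
\[
a(0)=-1,\quad a(\pi/2)=0,\quad a(\pi)=1,\quad a'(\gamma)=2\sigma_*(n)(\sin\gamma)^{n-2}.
\]
Thus the two assertions of the theorem are equivalent to $f_1\ge0$ and $f_2\ge0$ on $[0,\pi]$, where
\[
f_1(\gamma):=(\sin\gamma)^{n-1}-\bigl(1-a(\gamma)^2\bigr),
\qquad
f_2(\gamma):=\frac{n-1}{4\sigma_*(n)}\bigl(1-a(\gamma)^2\bigr)-(\sin\gamma)^{n-1}.
\]
Both functions are invariant under $\gamma\mapsto\pi-\gamma$ (because $a(\pi-\gamma)=-a(\gamma)$ and $\sin(\pi-\gamma)=\sin\gamma$), and a direct evaluation shows that $f_1$ vanishes at $\gamma=0,\pi/2,\pi$ (that is, at $a=-1,0,1$) while $f_2$ vanishes at $\gamma=0,\pi$ (at $a=\pm1$); this already accounts for the asserted equality cases. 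Along the way I would use the classical two-sided bound $\frac{1}{2}<\sigma_*(n)<\frac{n-1}{4}$ for $n\ge4$ (equivalently $\frac{n}{2(n-1)}<\omega_*(n)<\frac{n}{4}$, since $\sigma_*(n)=\frac{n-1}{n}\omega_*(n)$), which follows from the bounds on $\omega_*(n)$ recalled above; the upper bound already fails at $n=3$, where $a(\gamma)=-\cos\gamma$ and $f_1\equiv f_2\equiv0$, which is why $n\ge4$ is assumed.

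I would dispose of $f_2$ first. Differentiating, and using $a'=2\sigma_*(n)(\sin\gamma)^{n-2}$, gives
\[
f_2'(\gamma)=-(n-1)(\sin\gamma)^{n-2}\,m(\gamma),\qquad m(\gamma):=a(\gamma)+\cos\gamma,
\]
so on $(0,\pi)$ the sign of $f_2'$ is that of $-m$. The function $m$ vanishes at $0,\pi/2,\pi$, is antisymmetric about $\pi/2$, and $m'(\gamma)=\sin\gamma\bigl(2\sigma_*(n)(\sin\gamma)^{n-3}-1\bigr)$. On $[0,\pi/2]$ the factor $2\sigma_*(n)(\sin\gamma)^{n-3}-1$ is strictly increasing (here $n\ge4$ enters) and runs from $-1$ to $2\sigma_*(n)-1>0$, so $m$ first strictly decreases below $0$ and then strictly increases back to $m(\pi/2)=0$; hence $m<0$ on $(0,\pi/2)$, and by antisymmetry $m>0$ on $(\pi/2,\pi)$. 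Therefore $f_2$ strictly increases on $(0,\pi/2)$ and strictly decreases on $(\pi/2,\pi)$, and since $f_2(0)=f_2(\pi)=0$ we conclude $f_2>0$ on $(0,\pi)$, with equality exactly at $a=\pm1$.

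For $f_1$ the scheme is the same, but the auxiliary function is no longer monotone. One computes
\[
f_1'(\gamma)=(\sin\gamma)^{n-2}\,q(\gamma),\qquad q(\gamma):=(n-1)\cos\gamma+4\sigma_*(n)\,a(\gamma),
\]
with $q(0)=(n-1)-4\sigma_*(n)>0$, $q(\pi/2)=0$, $q(\pi)=4\sigma_*(n)-(n-1)<0$, and $q'(\gamma)=\sin\gamma\bigl(8\sigma_*(n)^2(\sin\gamma)^{n-3}-(n-1)\bigr)$. On $(0,\pi/2)$ the factor in $q'$ is strictly increasing and starts at $-(n-1)<0$, so $q'$ can change sign at most once, from $-$ to $+$. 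The crucial point is that it must become positive: if $q'\le0$ on $(0,\pi/2)$, then $q$ is strictly decreasing there (the strictly increasing factor in $q'$ cannot vanish on an interval), so since $q(0)>0=q(\pi/2)$ we would have $q>0$ on $[0,\pi/2)$, hence $f_1'>0$ there and $f_1(\pi/2)>f_1(0)=0$, a contradiction. Thus $q$ strictly decreases to a minimum and then strictly increases up to $q(\pi/2)=0$; that minimum is therefore negative, and with $q(0)>0$ this gives a unique $\gamma_0\in(0,\pi/2)$ with $q>0$ on $[0,\gamma_0)$ and $q<0$ on $(\gamma_0,\pi/2)$. Hence $f_1$ strictly increases on $(0,\gamma_0)$ and strictly decreases on $(\gamma_0,\pi/2)$, so $f_1>0$ on $(0,\pi/2)$; by $f_1(\pi-\gamma)=f_1(\gamma)$ the same holds on $(\pi/2,\pi)$, giving $f_1\ge0$ on $[0,\pi]$ with equality precisely at $a\in\{-1,0,1\}$.

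The step I expect to be the main obstacle is this last one. For $f_2$ a single differentiation together with a monotonicity argument closes the proof, whereas for $f_1$ the function $q$ is genuinely non-monotone and the equation $q(\gamma)=0$ admits no usable closed form; the self-referential device---using that $f_1$ is forced back to the value $0$ at $\gamma=\pi/2$ in order to pin down the sign pattern of $q$, and hence of $f_1'$---is what makes the argument work without analysing that equation directly. A secondary subtlety is that the numerical facts used ($4\sigma_*(n)<n-1$, $2\sigma_*(n)>1$, and the by-product $8\sigma_*(n)^2>n-1$) are all close to equality at $n=4$ and degenerate at $n=3$, so the proof genuinely relies on the sharp form of the known bounds for $\omega_*(n)$.
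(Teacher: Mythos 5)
Your proof is correct and follows essentially the same route as the paper: after the change of variable $a\leftrightarrow\gamma$, your $f_1$, $f_2$, $q$, $m$ are (up to positive constant factors and reparametrization) the paper's auxiliary functions $h$, $g$, $h'$ and $G$, and the sign analysis via one further differentiation together with the bounds $\tfrac12<\sigma_*(n)<\tfrac{n-1}{4}$ is the same increase--then--decrease argument between equal endpoint values. The one genuine refinement is your self-referential argument that $q'$ must change sign on $(0,\pi/2)$ (forced by $f_1(\pi/2)=0$), which replaces the paper's use of the sharper lower bound $\sigma_*(n)>\sqrt{(n-1)/8}$ to get $h''(0)>0$ --- a bound that at $n=4$ actually requires Alzer's refinement rather than Borgwardt's estimate alone, so your variant is slightly more economical in its numerical inputs.
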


\noindent Thus, combining Corollary \ref{cor1} and the inequality (\ref{ineqsin}), we disprove Liu's conjecture.

\br\hfill
\begin{itemize}
  \item[(i)] For $n=3$, we have  $(\sin \gamma_a)^{n-1}= 1-a^2$.
  \item[(ii)]  For $n=2$, $(\sin \gamma_a)^{n-1}\leq 1-a^2$.
  
\noindent Indeed, in the planar case,  $\ds A_2^{cap}(\gamma_a)=\frac{\gamma_a}{\pi}=\frac{1+a}{2}$,  thus $\ds \gamma_a=\frac{\pi}{2}+\frac{\pi a}{2}$  and  $\sin \gamma_a=\cos(\frac{\pi a}{2})\leq 1-a^2$. 
\end{itemize}
\er

As for $n=3$, we have $\frac{4}{n} \frac{\omega_{n-1}}{\omega_n}=1$ and combining Theorem \ref{KMM1} and the inequality (\ref{inq2}) in  Theorem \ref{inq1}, it yields the following.

\begin{thm}\label{thm2.4}
Let $n\geq 3$ and  $h^*: \Sp \to [-1,1]$ be a  function on  $\Sp$ with values in $[-1,1]$ and $h=P_{\alpha,\beta}[h^*]$. Then,

\begin{equation}\label{est1}
|\nabla h(0)| \leq \beta (1-|h(0)|^2). 
\end{equation}

The constant $\beta$ is sharp in  (\ref{est1}).
\end{thm}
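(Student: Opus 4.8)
The plan is to substitute the geometric estimate (\ref{inq2}) of Theorem~\ref{inq1} into the sharp gradient bound (\ref{estgrad}) of Theorem~\ref{KMM1}. Put $a:=h(0)$; since $P_{\alpha,\beta}(0,\cdot)\equiv 1$ on $\Sp$ we have $a=\int_{\Sp}h^*\,d\sigma^0\in[-1,1]$, so the angle $\gamma=\gamma_a\in[0,\pi]$ with $A_n^{cap}(\gamma)=\tfrac{1+a}{2}$ is well defined, as in (\ref{constraint}). Theorem~\ref{KMM1} gives
$$|\nabla h(0)|\le D_n(\gamma,\beta)=\frac{4\beta}{n}\,\omega_*(n)\,(\sin\gamma)^{n-1},$$
and, using $\sigma_n=n\omega_n$, hence $\sigma_*(n)=\dfrac{\sigma_{n-1}}{\sigma_n}=\dfrac{(n-1)\omega_{n-1}}{n\omega_n}=\dfrac{n-1}{n}\,\omega_*(n)$, this rewrites as $D_n(\gamma,\beta)=\dfrac{4\beta\,\sigma_*(n)}{n-1}\,(\sin\gamma)^{n-1}$.

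Then I would split according to the dimension. For $n\ge 4$, inequality (\ref{inq2}) with this $a$ reads $(\sin\gamma)^{n-1}\le\dfrac{n-1}{4\sigma_*(n)}(1-a^2)$, and substituting,
$$|\nabla h(0)|\le\frac{4\beta\,\sigma_*(n)}{n-1}\cdot\frac{n-1}{4\sigma_*(n)}(1-a^2)=\beta(1-a^2)=\beta\bigl(1-|h(0)|^2\bigr).$$
For $n=3$ one has $\dfrac{4}{n}\omega_*(n)=\dfrac{4}{3}\cdot\dfrac{\omega_2}{\omega_3}=\dfrac{4}{3}\cdot\dfrac{3}{4}=1$ (equivalently $\sigma_*(3)=\tfrac12$, so again $\dfrac{n-1}{4\sigma_*(n)}=1$), while the identity $(\sin\gamma_a)^{2}=1-a^2$ recorded for $n=3$ right after Theorem~\ref{inq1} turns the bound into $D_3(\gamma,\beta)=\beta(1-a^2)$. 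Either way (\ref{est1}) follows.

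For the sharpness claim I would use the extremal functions furnished by Theorem~\ref{KMM1}. Take $h=h^0_{\alpha,\beta}$ as in (\ref{countex}) built from the cap of contact angle $\gamma$; then $|\nabla h(0)|=D_n(\gamma,\beta)$ with equality, and $h(0)=a$ with $A_n^{cap}(\gamma)=\tfrac{1+a}{2}$, so
$$\frac{|\nabla h(0)|}{1-|h(0)|^2}=\beta\cdot\frac{4\sigma_*(n)}{n-1}\cdot\frac{(\sin\gamma_a)^{n-1}}{1-a^2}.$$
When $n=3$ the last factor is identically $1$, so equality in (\ref{est1}) is actually attained (e.g. $\gamma=\pi/2$, $h(0)=0$). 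When $n\ge 4$ I would let $a\to 1^{-}$, equivalently shrink the complementary cap $S^c(e_n,\gamma)$ to a point, $\delta:=\pi-\gamma\to 0$; a short computation of the normalized area of a small cap gives $1-a^2\sim\tfrac{4\sigma_*(n)}{n-1}\,\delta^{n-1}$ and $(\sin\gamma_a)^{n-1}\sim\delta^{n-1}$, so the displayed ratio tends to $1$ and $\dfrac{|\nabla h(0)|}{1-|h(0)|^2}\to\beta$. Hence no constant smaller than $\beta$ works, which is the asserted sharpness.

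The heavy lifting here is done entirely by Theorems~\ref{KMM1} and~\ref{inq1}, so nothing is genuinely deep. The only point that needs care is the sharpness statement for $n\ge 4$: one must verify that the two sides of (\ref{inq2}) are tangent at the endpoint $a=1$ (that is, carry out the small-cap asymptotics above), which is exactly why for $n\ge 4$ the constant $\beta$ is only approached in the limit $h(0)\to\pm 1$ rather than attained, in contrast with the case $n=3$.
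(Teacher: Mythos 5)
Your proof is correct and follows essentially the same route as the paper: the paper's own proof is the one-line observation that (\ref{est1}) is a direct consequence of Theorem \ref{KMM1} combined with inequality (\ref{inq2}) of Theorem \ref{inq1} (with the case $n=3$ handled via $\tfrac{4}{n}\omega_*(n)=1$ and $(\sin\gamma_a)^2=1-a^2$), exactly as you do. Your small-cap asymptotics justifying sharpness for $n\ge 4$ is a correct addition that the paper leaves implicit.
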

In particular,  we get the following  estimate of the gradient at zero for harmonic and hyperbolic harmonic functions.

\begin{cor}\label{cor11}
Let $n\geq 3$ and  $h:\B^n \to (-1,1)$  be a harmonic or hyperbolic harmonic function. Then the following  sharp estimates hold:

$$
|\nabla h(0)| \leq  \left.
\begin{cases}
\ds\frac{n}{2} (1-|h(0)|^2) & \text{ if } h \text{ is harmonic,} \\
 (n-1) (1-|h(0)|^2) & \text{ if } h \text{ is hyperbolic-harmonic}.
\end{cases}
\right.
$$

Furthermore, this inequality is strict for $n \geq 4$.
\end{cor}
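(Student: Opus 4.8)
The plan is to read off Corollary~\ref{cor11} from Corollary~\ref{cor1} by inserting the elementary estimate \eqref{inq2} of Theorem~\ref{inq1}. Put $a := h(0)$; since $h$ takes values in the \emph{open} interval $(-1,1)$ we have $a \in (-1,1)$, and let $\gamma_a \in [0,\pi]$ be the unique angle with $A_n^{cap}(\gamma_a) = \frac{1+a}{2}$ --- this is exactly the angle occurring in Corollary~\ref{cor1}.

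Suppose first that $h$ is harmonic. Corollary~\ref{cor1} gives $|\nabla h(0)| \le 2\,\omega_*(n)\,(\sin\gamma_a)^{n-1}$, while \eqref{inq2} bounds $(\sin\gamma_a)^{n-1} \le \frac{n-1}{4\sigma_*(n)}(1-a^2)$; combining them,
\[
|\nabla h(0)| \;\le\; 2\,\omega_*(n)\cdot\frac{n-1}{4\sigma_*(n)}\,(1-a^2)\;=\;\frac{n-1}{2}\,\frac{\omega_*(n)}{\sigma_*(n)}\,(1-a^2).
\]
From $\sigma_n = n\omega_n$ (equivalently $\sigma_{n-1} = (n-1)\omega_{n-1}$) one has $\sigma_*(n) = \frac{n-1}{n}\,\omega_*(n)$, so $\omega_*(n)/\sigma_*(n) = n/(n-1)$ and the right-hand side equals $\frac{n}{2}\,(1-|h(0)|^2)$. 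When $h$ is hyperbolic harmonic we use instead the second branch of Corollary~\ref{cor1}, $|\nabla h(0)| \le 4\,\sigma_*(n)\,(\sin\gamma_a)^{n-1}$, and the same substitution gives $|\nabla h(0)| \le 4\sigma_*(n)\cdot\frac{n-1}{4\sigma_*(n)}(1-a^2) = (n-1)\,(1-|h(0)|^2)$. This establishes both inequalities. (Equivalently, one may quote Theorem~\ref{thm2.4} directly, with $\beta = \frac{n}{2}$ for harmonic functions and $\beta = n-1$ for hyperbolic harmonic ones, these being the parameters $(\alpha,\beta) = (1,\frac{n}{2})$ and $(n-1,n-1)$ of the corresponding Poisson kernels.)

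For strictness when $n \ge 4$: by Theorem~\ref{inq1} the inequality \eqref{inq2} is strict unless $a = \pm 1$, and $a = h(0)$ lies in $(-1,1)$; hence $(\sin\gamma_a)^{n-1} < \frac{n-1}{4\sigma_*(n)}(1-a^2)$ and the bounds above become strict. One can also see this from the extremal picture: equality in Corollary~\ref{cor11} would force, via the sharpness clause of Theorem~\ref{KMM1}, that $h$ agree up to an orthogonal change of variables with $h^0_{\alpha,\beta}$ and that \eqref{inq2} hold with equality, i.e.\ $h(0) = \pm 1$, whence $h \equiv \pm 1$ by the maximum principle, contradicting $h(\B^n) \subseteq (-1,1)$. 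The argument involves no real obstacle: it is a direct substitution once Corollary~\ref{cor1} and Theorem~\ref{inq1} are in hand, the only bookkeeping being the identity $\omega_*(n)/\sigma_*(n) = n/(n-1)$.
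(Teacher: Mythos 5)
Your proposal is correct and follows essentially the same route as the paper: the authors also obtain Corollary~\ref{cor11} by feeding the estimate \eqref{inq2} of Theorem~\ref{inq1} into Theorem~\ref{KMM1} (equivalently Corollary~\ref{cor1}) and specializing $\beta$ to $\tfrac{n}{2}$ and $n-1$, using $\sigma_*(n)=\tfrac{n-1}{n}\omega_*(n)$; your strictness argument via the equality cases $a=\pm1$ of \eqref{inq2} is in fact spelled out more carefully than in the paper. The only cosmetic point is that \eqref{inq2} is stated for $n\geq 4$, so for $n=3$ one should invoke instead the identity $(\sin\gamma_a)^{2}=1-a^2$ (noted in the paper's remark), under which the same substitution goes through with equality.
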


Using the ball of center $x$ and radius $1-|x|$ in the harmonic case and M\"obius transformations in the hyperbolic-harmonic case, we obtain the following.

\begin{thm}\label{thm2.5}
Let $n\geq 3$ and  $h:\B^n \to (-1,1)$  be a harmonic function, then
$$
|\nabla h (x)|\leq \frac{n}{2}  \frac{1-|h(x)|^2}{1-|x|}.
$$
In addition, this inequality is strict for $n \geq 4$.
\end{thm}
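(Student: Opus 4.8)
The plan is to reduce the estimate at an arbitrary point $x\in\B^n$ to the estimate at the origin provided by Corollary \ref{cor11}, using a rescaling of the ball. Fix $x\in\B^n$ and set $r=1-|x|$, so that the Euclidean ball $B(x,r)$ is contained in $\B^n$. Define $g:\B^n\to(-1,1)$ by $g(z)=h(x+rz)$. Since the Laplacian is invariant (up to a positive scalar factor) under translations and dilations, $g$ is harmonic on $\B^n$, and clearly $|g|<1$. Applying the harmonic case of Corollary \ref{cor11} to $g$ at the origin gives
\be\label{aux-rescale}
|\nabla g(0)|\leq \frac{n}{2}\bigl(1-|g(0)|^2\bigr).
\ee
Now $g(0)=h(x)$ and, by the chain rule, $\nabla g(0)=r\,\nabla h(x)$, so \eqref{aux-rescale} becomes $r\,|\nabla h(x)|\leq \frac{n}{2}(1-|h(x)|^2)$, i.e.
\be
|\nabla h(x)|\leq \frac{n}{2}\,\frac{1-|h(x)|^2}{1-|x|},
\ee
which is the desired inequality.

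For the strictness claim when $n\geq 4$, I would argue that equality here would force equality in \eqref{aux-rescale} for the function $g$. By the sharpness discussion following Corollary \ref{cor11} (which in turn rests on Theorem \ref{inq1}, where for $n\geq 4$ the inequality \eqref{ineqsin} is strict except at $a=-1,0,1$, and on the fact that the bound in Corollary \ref{cor11} is attained only in a limiting sense), the inequality $|\nabla g(0)|\leq \frac{n}{2}(1-|g(0)|^2)$ is in fact strict for every harmonic $g:\B^n\to(-1,1)$ when $n\geq 4$; hence so is the inequality for $h$ at $x$. Concretely: from Corollary \ref{cor1}, $|\nabla g(0)|\leq 2\omega_*(n)(\sin\gamma_{a})^{n-1}$ with $a=g(0)$, and combining with the strict form of \eqref{ineqsin} gives $|\nabla g(0)|<2\omega_*(n)(1-a^2)\cdot\frac{?}{}$; one must be slightly careful here because the constant in Corollary \ref{cor1} is $2\omega_*(n)$ while the constant in Corollary \ref{cor11} is $\frac{n}{2}$, and these differ, so the strictness should be extracted directly from the chain "Corollary \ref{cor1} $+$ strict \eqref{ineqsin}" exactly as in the proof of Corollary \ref{cor11}.

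The only real obstacle is this strictness bookkeeping: one has to make sure the strict inequality in Theorem \ref{inq1} genuinely propagates through Corollary \ref{cor11} and is not lost when $g(0)\in\{-1,0,1\}$ is approached. But $g(0)=h(x)$ is an interior value in $(-1,1)$, and for $n\geq 4$ the value $a=0$ is precisely a case where \eqref{ineqsin} is an \emph{equality}, so one cannot simply invoke "strict for all $a$"; instead the correct statement is that the \emph{combined} bound $|\nabla g(0)|\leq \frac{n}{2}(1-a^2)$ obtained by chaining Corollary \ref{cor1} with \eqref{ineqsin} is strict for $n\geq 4$ at every $a\in(-1,1)$, including $a=0$, because at $a=0$ one has $(\sin\gamma_0)^{n-1}=1$ while $2\omega_*(n)<\frac{n}{2}$ for $n\geq 4$ (equivalently $\frac{4}{n}\omega_*(n)<1$, which is exactly the inequality \eqref{inq2} being strict away from $a=\pm1$). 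This is the point to state carefully; everything else is the routine rescaling above.
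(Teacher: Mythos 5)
Your proposal is correct and follows essentially the same route as the paper: rescale via $g(y)=h(x+(1-|x|)y)$, note $g(0)=h(x)$ and $\nabla g(0)=(1-|x|)\nabla h(x)$, and apply Corollary \ref{cor11} at the origin. Your extended discussion of strictness is also consistent with the paper, which simply inherits the strictness for $n\geq 4$ from Corollary \ref{cor11} (ultimately from the strict form of (\ref{inq2}) on $(-1,1)$, i.e.\ $\sigma_*(n)<\frac{n-1}{4}$).
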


\begin{thm}\label{thm2.6}
Let $n\geq 3$ and let $h:\B^n \to (-1,1)$  be a hyperbolic harmonic function, then
$$
|\nabla h (x)|\leq  (n-1)  \frac{1-
|h(x)|^2
}{1-|x|^2}.
$$
Therefore,
$$
d_{h_2}(h(x_1),h(x_2))\leq  (n-1) \, d_{h_n}(x_1,x_2), \quad   x_1,x_2\in  \mathbb{B}^n,  
$$
\end{thm}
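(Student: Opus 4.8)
The plan is to bootstrap from the pointwise estimate at the origin, Corollary \ref{cor11} (hyperbolic-harmonic case), using the conformal invariance of the hyperbolic-harmonic operator under Möbius automorphisms of $\B^n$. First I would fix $x\in\B^n$ and let $\varphi=\varphi_x\in\Aut(\B^n)$ be the Möbius transformation with $\varphi(0)=x$; set $\tilde h=h\circ\varphi$. Since $\Delta_h$ transforms naturally under Möbius maps of the ball, $\tilde h:\B^n\to(-1,1)$ is again hyperbolic harmonic, so Corollary \ref{cor11} gives $|\nabla\tilde h(0)|\le (n-1)\,(1-|\tilde h(0)|^2)=(n-1)\,(1-|h(x)|^2)$. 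The task then reduces to relating $|\nabla\tilde h(0)|$ to $|\nabla h(x)|$ via the chain rule: $\nabla\tilde h(0)=D\varphi(0)^{T}\,\nabla h(x)$, and for the standard involutive automorphism one has the well-known identity that $D\varphi_x(0)$ is $(1-|x|^2)$ times an orthogonal matrix (up to the usual normalization). Hence $|\nabla\tilde h(0)|=(1-|x|^2)\,|\nabla h(x)|$, and combining the two displays yields
$$
|\nabla h(x)|\le (n-1)\,\frac{1-|h(x)|^2}{1-|x|^2},
$$
which is the first assertion.

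For the distance inequality I would interpret the gradient bound infinitesimally. The hyperbolic metric on $\B^n$ has density $\frac{1}{1-|x|^2}$ (up to the conventional factor $2$, which I would match to the paper's normalization of $d_{h_n}$), and the hyperbolic metric $d_{h_2}$ on the interval $(-1,1)$ has density $\frac{1}{1-t^2}$. The inequality just proved says precisely that $h:(\B^n,d_{h_n})\to((-1,1),d_{h_2})$ is Lipschitz with constant $n-1$ at the infinitesimal level: the length-element of $h(\gamma)$ is at most $(n-1)$ times the length-element of $\gamma$ for every smooth curve $\gamma$. Then, given $x_1,x_2\in\B^n$, I would take a geodesic (or near-geodesic) $\gamma$ from $x_1$ to $x_2$, integrate the infinitesimal estimate along $\gamma$ to get
$$
d_{h_2}(h(x_1),h(x_2))\le \mathrm{length}_{h_2}(h\circ\gamma)\le (n-1)\,\mathrm{length}_{h_n}(\gamma)=(n-1)\,d_{h_n}(x_1,x_2).
$$

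The main obstacle is purely bookkeeping rather than conceptual: one must verify (a) that $\Delta_h$ really is invariant under $\Aut(\B^n)$ in the precise sense needed—i.e.\ $u\mapsto u\circ\varphi$ preserves hyperbolic harmonicity, which is the defining feature of $\Delta_h$ as the Laplace–Beltrami operator of the hyperbolic metric—and (b) that the normalization constants in $d_{h_n}$, $d_{h_2}$, and in the Jacobian identity $|D\varphi_x(0)|=1-|x|^2$ are all consistent, so that no spurious factor of $2$ creeps in. I would also note the strictness claim for $n\ge 4$: it is inherited directly from the strict inequality in Corollary \ref{cor11}, since equality in the transported estimate at $0$ would force equality for $\tilde h$ at the origin, which is impossible when $n\ge 4$.
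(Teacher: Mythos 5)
Your proposal matches the paper's proof: the paper likewise composes $h$ with the M\"obius automorphism $\varphi_x$ sending $0$ to $x$, invokes the M\"obius invariance of $\Delta_h$ to see that $h\circ\varphi_x$ is again hyperbolic harmonic, applies Corollary \ref{cor11} at the origin, and uses the identity $\nabla(h\circ \varphi_x)(0)=-(1-|x|^2)\nabla h(x)$. The distance inequality then follows, exactly as you describe, by integrating the infinitesimal estimate along curves.
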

 where $d_{h_n}$ denotes the hyperbolic distance of the unit ball $\B^n$. For $n=2$, $d_{h_2}$ is simply the Poincar\'e distance of the unit disc. \\
 
 This result is connected to the Khavinson conjecture for hyperbolic harmonic functions, see \cite{KhaHagg}.
 
 \begin{teo}\cite[Theorem 2]{KhaHagg}
 Let $n\geq 3$ and let $h:\B^n \to (-1,1)$  be a hyperbolic harmonic function, then
$$
|\nabla h (x)|\leq  4\sigma_*(n) \frac{1}{1-|x|^2}.
$$
 \end{teo}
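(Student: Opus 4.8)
The plan is to reduce the estimate at an arbitrary point $x\in\B^n$ to the corresponding estimate at the origin, exploiting the invariance of the hyperbolic Laplacian $\Delta_h$ under the Möbius automorphisms of $\B^n$, and then to invoke Corollary \ref{cor1} at the center. A bounded hyperbolic harmonic function $h:\B^n\to(-1,1)$ corresponds to the parameters $(\alpha,\beta)=(n-1,n-1)$ in the generalized Poisson kernel, so the sharp estimate at zero furnished by Theorem \ref{KMM1} and Corollary \ref{cor1} is directly available.

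First I would treat the case $x=0$. By Corollary \ref{cor1} applied to $h$, one has
$$|\nabla h(0)|\le 4\sigma_*(n)\,(\sin\gamma)^{n-1},$$
where $\gamma\in[0,\pi]$ is determined by $A_n^{cap}(\gamma)=\frac{1+h(0)}{2}$. Since $(\sin\gamma)^{n-1}\le 1$, this yields the desired bound $|\nabla h(0)|\le 4\sigma_*(n)$, which is exactly the right-hand side at $x=0$ because $1-|x|^2=1$ there. Note that here we use only $\|h\|_\infty\le 1$ and discard the refinement coming from the value $h(0)$.

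Next, for arbitrary $x\in\B^n$, I would choose a Möbius automorphism $\phi_x$ of $\B^n$ with $\phi_x(0)=x$. Since $\Delta_h$ is (a scalar multiple of) the Laplace--Beltrami operator of the hyperbolic metric on $\B^n$, and Möbius automorphisms are isometries of this metric, the composition $g:=h\circ\phi_x$ is again hyperbolic harmonic and satisfies $|g|<1$. Applying the case $x=0$ to $g$ gives $|\nabla g(0)|\le 4\sigma_*(n)$. It remains to relate $\nabla g(0)$ to $\nabla h(x)$. Because $\phi_x$ is a hyperbolic isometry, the relation $\frac{|D\phi_x(0)\,v|}{1-|\phi_x(0)|^2}=\frac{|v|}{1-|0|^2}$ holds for every $v\in\R^n$; evaluating at $\phi_x(0)=x$ shows $|D\phi_x(0)\,v|=(1-|x|^2)|v|$, so $D\phi_x(0)=(1-|x|^2)\,O$ for some orthogonal $O$. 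The chain rule gives $\nabla g(0)=D\phi_x(0)^{\top}\nabla h(x)$, whence $|\nabla g(0)|=(1-|x|^2)|\nabla h(x)|$. Combining this with $|\nabla g(0)|\le 4\sigma_*(n)$ produces
$$|\nabla h(x)|\le \frac{4\sigma_*(n)}{1-|x|^2},$$
as claimed.

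The main obstacle, and the only non-formal input, is the conformal--isometric behaviour of the Möbius automorphisms: namely the Möbius invariance of $\Delta_h$ (equivalently, that $\Delta_h$ is proportional to the hyperbolic Laplace--Beltrami operator) together with the precise conformal factor $1-|x|^2$ of the differential $D\phi_x(0)$. Both are standard facts about the ball model of hyperbolic space; the invariance is verified by a direct chain-rule computation applied to the defining expression of $\Delta_h$, while the conformal factor follows from the isometry relation for the hyperbolic metric. Once these are in hand, the argument is simply a transfer of the origin estimate by an isometry, and it requires no further analysis of the Poisson kernel away from the center.
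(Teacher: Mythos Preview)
Your argument is correct. Note, however, that the statement in question is quoted from \cite{KhaHagg} and is not given an independent proof in the present paper; it is listed as a related result following Theorem~\ref{thm2.6}. That said, your method is precisely the one the paper employs to establish Theorem~\ref{thm2.6}: apply the sharp origin estimate from Corollary~\ref{cor1} and transfer it to an arbitrary point via a M\"obius automorphism $\varphi_x$, using the M\"obius invariance of $\Delta_h$ and the identity $\nabla(h\circ\varphi_x)(0)=-(1-|x|^2)\nabla h(x)$ (cf.\ \cite[p.~18]{stoll}). The only difference is that for Theorem~\ref{thm2.6} the paper first passes through the inequality $(\sin\gamma)^{n-1}\le \frac{n-1}{4\sigma_*(n)}(1-a^2)$ of Theorem~\ref{inq1} to obtain the constant $n-1$ together with the factor $1-|h(x)|^2$, whereas you simply bound $(\sin\gamma)^{n-1}\le 1$ to recover the constant $4\sigma_*(n)$ without that factor. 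Both routes are the same M\"obius-transfer strategy; yours is the more direct path to the weaker, value-independent bound.
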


\noindent  For vector valued functions, we prove the following.
 
 \begin{thm}\label{thm:Dhh}
Let $n\geq 3$, $m\geq 1$ and  $h:\B^n \to \B^m$  be a harmonic or hyperbolic harmonic function. Then the following   estimates hold.

$$
|Dh(x)| \leq  \left.
\begin{cases}
\ds\frac{n}{2}  \frac{1}{1-|x|} & \text{ if } h \text{ is harmonic,} \\
\ds (n-1)  \frac{1}{1-|x|^2} & \text{ if } h \text{ is hyperbolic-harmonic},
\end{cases}
\right.
$$
where
$$|Dh(x)|=\sup_{|v|=1} |Dh(x)v|. $$
\end{thm}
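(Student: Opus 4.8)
The plan is to reduce the vector-valued estimate to the scalar estimates already obtained in Theorems \ref{thm2.5} and \ref{thm2.6}. Fix $x\in\B^n$ and a unit vector $v\in\R^n$; we must bound $|Dh(x)v|$. Write $h=(h_1,\dots,h_m):\B^n\to\B^m$. The first step is to observe that for any fixed unit vector $e\in\R^m$, the scalar function $u_e:=\langle h,e\rangle:\B^n\to(-1,1)$ is again harmonic (resp. hyperbolic harmonic), since each component $h_j$ is and the relevant Laplacian ($\Delta$ or $\Delta_h$) is linear; moreover $|u_e|\le|h|<1$. Hence Theorem \ref{thm2.5} (harmonic case) or Theorem \ref{thm2.6} (hyperbolic-harmonic case) applies to $u_e$, giving
$$
|\nabla u_e(x)| \le \frac{n}{2}\,\frac{1-|u_e(x)|^2}{1-|x|}
\quad\text{resp.}\quad
|\nabla u_e(x)| \le (n-1)\,\frac{1-|u_e(x)|^2}{1-|x|^2}.
$$

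The second step is the linear-algebra identity $\langle Dh(x)v,e\rangle = \langle \nabla u_e(x),v\rangle$, so that by Cauchy–Schwarz $|\langle Dh(x)v,e\rangle|\le |\nabla u_e(x)|\,|v| = |\nabla u_e(x)|$. Taking the supremum over unit vectors $e\in\R^m$ recovers $|Dh(x)v| = \sup_{|e|=1}|\langle Dh(x)v,e\rangle|$. Combining with the scalar bound and using $1-|u_e(x)|^2\le 1$ yields $|Dh(x)v|\le \frac{n}{2}\,\frac{1}{1-|x|}$ in the harmonic case and $|Dh(x)v|\le (n-1)\,\frac{1}{1-|x|^2}$ in the hyperbolic-harmonic case. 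Since $v$ was an arbitrary unit vector, taking the supremum over $|v|=1$ gives the claimed bounds for $|Dh(x)|$.

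There is a subtlety worth addressing carefully: applying Theorems \ref{thm2.5}/\ref{thm2.6} requires $u_e$ to map into $(-1,1)$, which holds since $|u_e(x)|\le|h(x)|<1$ as $h(\B^n)\subset\B^m$; and one should note the estimate is applied with the bound $1-|u_e(x)|^2\le 1$ rather than the sharp form, which is all that is needed here. The main (minor) obstacle is simply making the duality step $|Dh(x)v| = \sup_{|e|=1}\langle Dh(x)v,e\rangle$ precise and checking that the supremum over $e$ of the right-hand scalar estimate can be taken without losing uniformity — but since the scalar bounds do not depend on $e$ except through $|u_e(x)|\le 1$, this is immediate. No sharpness claim is made in the statement, so no extremal analysis is needed.
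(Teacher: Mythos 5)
Your proposal is correct and follows essentially the same route as the paper: both reduce to the scalar case by considering $\langle h,\theta\rangle$ for a unit vector $\theta\in\R^m$, apply Theorem \ref{thm2.5} (resp.\ \ref{thm2.6}) with the trivial bound $1-|\langle h(x),\theta\rangle|^2\le 1$, and recover $|Dh(x)v|$ by duality over $\theta$ and then take the supremum over $v$. The only difference is that you spell out the duality and Cauchy--Schwarz steps that the paper leaves implicit.
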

 
\noindent In addition, it yields the following. 
 
 \begin{thm}\label{thm:hh}
Let $n\geq 3$, $m\geq 1$ and  $h:\B^n \to \B^m$  be a harmonic or hyperbolic harmonic function. Then the following   estimates hold:

$$
|\nabla\, |h|(x)| \leq  \left.
\begin{cases}
\ds\frac{n}{2}  \frac{1-|h(x)|^2}{1-|x|} & \text{ if } h \text{ is harmonic,} \\
\ds (n-1)  \frac{1-
|h(x)|^2
}{1-|x|^2} & \text{ if } h \text{ is hyperbolic-harmonic}.
\end{cases}
\right.
$$

Furthermore, this inequality is strict for $n \geq 4$.
\end{thm}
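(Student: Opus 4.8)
The plan is to reduce the vector‑valued statement to the scalar estimates already proved, namely Theorem \ref{thm2.5} in the harmonic case and Theorem \ref{thm2.6} in the hyperbolic harmonic case. Fix $x_0\in\B^n$. The key point is that for a well‑chosen fixed unit vector $w\in\R^m$ the scalar function $g:=\langle h,w\rangle:\B^n\to(-1,1)$ is again harmonic (resp.\ hyperbolic harmonic), being a linear combination of the components $h_1,\dots,h_m$, and $w$ can be picked so that simultaneously $|g(x_0)|=|h(x_0)|$ and $|\nabla|h|(x_0)|\le|\nabla g(x_0)|$. Applying the scalar theorem to $g$ then gives exactly the asserted inequality at $x_0$, and since those theorems already carry the strict inequality for $n\ge4$, so does the conclusion.

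First I would dispose of the generic case $h(x_0)\ne0$. Taking $w=h(x_0)/|h(x_0)|$ we get $g(x_0)=\langle h(x_0),w\rangle=|h(x_0)|$, and since on the set $\{h\ne0\}$ one has $\nabla|h|=|h|^{-1}\sum_i h_i\nabla h_i=|h|^{-1}(Dh)^{T}h$ while $\nabla g=(Dh)^{T}w$, the two gradients coincide at $x_0$. Theorem \ref{thm2.5} applied to $g$ then yields
\[
|\nabla|h|(x_0)|=|\nabla g(x_0)|\le\frac{n}{2}\,\frac{1-|g(x_0)|^2}{1-|x_0|}=\frac{n}{2}\,\frac{1-|h(x_0)|^2}{1-|x_0|},
\]
and Theorem \ref{thm2.6} gives the analogue with constant $n-1$ and denominator $1-|x_0|^2$ in the hyperbolic harmonic case. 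For the remaining points $x_0$ with $h(x_0)=0$, the function $|h|$ need not be differentiable; here $|\nabla|h|(x_0)|$ is read as the supremum over unit $v$ of the one‑sided directional derivatives, which equals $\lim_{t\to0^+}|h(x_0+tv)|/t=|Dh(x_0)v|$, so $|\nabla|h|(x_0)|\le|Dh(x_0)|$. Choosing $w\in\R^m$ a unit vector realizing the operator norm, i.e.\ $|(Dh(x_0))^{T}w|=|Dh(x_0)|$, the function $g=\langle h,w\rangle$ satisfies $g(x_0)=0$ and $|\nabla g(x_0)|=|(Dh(x_0))^{T}w|=|Dh(x_0)|$, and Theorem \ref{thm2.5} (resp.\ Theorem \ref{thm2.6}) applied to $g$ gives $|\nabla|h|(x_0)|\le|Dh(x_0)|\le\frac{n}{2}\frac{1}{1-|x_0|}$ (resp.\ $(n-1)\frac{1}{1-|x_0|^2}$), which is precisely the claimed bound since $1-|h(x_0)|^2=1$. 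In both cases the strictness for $n\ge4$ is inherited from the scalar theorems, and since every $x_0\in\B^n$ falls into one of the two cases, the proof is complete; this also refines Theorem \ref{thm:Dhh}.

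I expect the only genuine subtlety to be the treatment of the zero set of $h$: away from it the identity $\nabla|h|=|h|^{-1}(Dh)^{T}h$ makes the reduction to a single scalar function immediate, but at zeros one must instead pass through the operator norm of $Dh(x_0)$ and check both that $|\nabla|h|(x_0)|\le|Dh(x_0)|$ in the one‑sided sense and that the strict inequality of the scalar theorems still propagates. Everything else is routine linear algebra and bookkeeping.
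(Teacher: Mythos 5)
Your argument is correct and follows essentially the same route as the paper: for $h(x_0)\neq 0$ you reduce to the scalar function $g=\langle h, h(x_0)/|h(x_0)|\rangle$, verify $|\nabla g(x_0)|=|\nabla|h|(x_0)|$, and apply Theorem \ref{thm2.5} (resp.\ Theorem \ref{thm2.6}); for $h(x_0)=0$ your operator-norm argument is just an inlined proof of Theorem \ref{thm:Dhh}, which the paper cites directly for that case. No substantive difference.
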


Recall that 
$$
\left|\nabla\, |h|(x)\right|= \sup_{\beta \in \Sp} \lim_{t \to 0^+} \frac{\left|\,  |h(x+t\beta)|-  
|h(x)|\, \right|}{t}.
$$

Thus $\left|\nabla\, |h|(x)\right|$ coincides with the gradient of $|h|$ at $x$, if $h(x)\not =0$. Moreover, if $h(x)=0$, then $\left|\nabla\, |h|(x)\right|= |Dh(x)|$. Therefore, Theorem \ref{thm:hh} can be seen as an extension of Theorem \ref{thm:Dhh}. \\

We mention that in \cite{kavu}, the authors considered the corresponding theorem for vector harmonic functions defined on the unit disc, see \cite[Theorem 1.10]{kavu}. A Schwarz lemma for the modulus of a vector-valued analytic functions was previously  considered in \cite{pav}.\\

In \cite{Burgeth1994}, Burgeth introduced $R_{\alpha,\beta}(|x|,h(x))$ and  provides estimates for the radial derivative   in terms of  $|x|$  and $h(x)$.
In the  three-dimensional case, we  get explicit formula for  $R_{\alpha,\beta}(|x|,h(x))$.
In particular, we can modify   \cite[Corollary 3]{Burgeth1994} in the following way.

\begin{thm} \label{Thyp3}    If $u:\mathbb{B}^3\rightarrow (-1,1)$  is a harmonic function, then   for each $x\in \mathbb{B}^3$,
\be
\frac{-3-|x|u(x)}{1-|x|^2}   (1-u^2(x))/2\leq D_r u(x)\leq \frac{3-|x|u(x)}{1-|x|^2}  (1-u^2(x))/2.
\ee
\end{thm}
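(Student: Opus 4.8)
The plan is to extract from Burgeth's framework \cite{Burgeth1994} the two-sided bound for the radial derivative
$$
R_{\alpha,\beta}^-(|x|,u(x)) \le D_r u(x) \le R_{\alpha,\beta}^+(|x|,u(x)),
$$
specialize to the harmonic case $(\alpha,\beta)=(1,\tfrac n2)$ with $n=3$, and then carry out the integral that defines $R_{1,3/2}(r,a)$ explicitly. Concretely, for a bounded $u$ with $u(x)=a$, the extremal configuration (as in Theorem \ref{KMM1} and Corollary \ref{cor1}) is the Poisson transform of $\mathds 1_{S(\hat x,\gamma_a)}-\mathds 1_{S^c(\hat x,\gamma_a)}$, and the radial derivative at $x=r\hat x$ is obtained by differentiating $P_{1,3/2}[h^*](t\hat x)$ in $t$ at $t=r$. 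Writing the Poisson kernel for $\B^3$ as $P(x,y)=(1-|x|^2)/|x-y|^3$ and parametrizing $y\in\Sp$ by $s=\langle\hat x,y\rangle\in[-1,1]$ (with the $(n-1)=2$ dimensional normalized measure giving a factor $\tfrac12\,ds$), one gets
$$
D_r u(x) = \frac12\int_{-1}^1 \partial_r\!\left(\frac{1-r^2}{(1-2rs+r^2)^{3/2}}\right) h^*(s)\,ds,
$$
and $\partial_r\big((1-r^2)(1-2rs+r^2)^{-3/2}\big)$ is an elementary rational-times-$(1-2rs+r^2)^{-5/2}$ expression. The integral over $s$ is a standard antiderivative, so $R_{1,3/2}(r,a)$ comes out in closed form once the cap angle $\gamma_a$ is translated into the cut-point $s_0=\cos\gamma_a$ via the constraint $A_3^{cap}(\gamma_a)=\tfrac{1+a}{2}$; for $n=3$ this is the linear relation $s_0 = -a$ (since the normalized area of a cap on $S^2$ is an affine function of its height).

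The key computational steps, in order, are: (1) record Burgeth's sharp two-sided estimate and identify $R_{\alpha,\beta}^\pm$ with the radial derivative of the extremal Poisson transform at $x$; (2) reduce the surface integral over $\Sp$ to a one-variable integral in $s=\langle\hat x,y\rangle$ using rotational symmetry about $\hat x$; (3) compute $\int \partial_r\big((1-r^2)(1-2rs+r^2)^{-3/2}\big)\,ds$ explicitly and evaluate between the limits dictated by $s_0=\cos\gamma_a=-a$; (4) simplify, using $A_3^{cap}(\gamma_a)=\tfrac{1+a}{2}$ and the already-established identity $(\sin\gamma_a)^{n-1}=1-a^2$ for $n=3$ (Remark (i) after Theorem \ref{inq1}), to bring the bound into the stated form $\tfrac{\mp3-ra}{1-r^2}\cdot\tfrac{1-a^2}{2}$ with $r=|x|$, $a=u(x)$; (5) observe the maximum and minimum over admissible $h^*$ are attained exactly at this extremal configuration, which gives sharpness and hence the two inequalities for arbitrary harmonic $u$.

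I expect the main obstacle to be step (3)–(4): carrying the explicit integration through and then massaging the resulting algebraic expression so that the $(1-2rs+r^2)^{-1/2}$ boundary terms at $s=\pm1$ and at $s=s_0=-a$ collapse into the clean linear numerators $3-ra$ and $-3-ra$. The endpoint $s=1$ gives $(1-2r+r^2)^{1/2}=1-r$ and $s=-1$ gives $1+r$, which is routine, but the cancellation that produces precisely the factor $(1-a^2)/2$ and eliminates all square roots relies on the special value $s_0=-a$ together with $1-s_0^2 = 1-a^2 = (\sin\gamma_a)^2$; verifying that these conspire correctly — and checking the endpoint normalization $x=0$ against Corollary \ref{cor11}, where the bound must reduce to $|D_r u(0)|\le \tfrac32(1-a^2)$ — is where care is needed. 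Everything else (rotational reduction, differentiation under the integral, sharpness via the Burgeth extremal) is standard once the $n=3$ arithmetic for $A_3^{cap}$ is in hand.
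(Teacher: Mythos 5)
The paper itself contains no written proof of Theorem \ref{Thyp3}: it is presented as a modification of Burgeth's Corollary 3, obtained by computing his quantity $R_{\alpha,\beta}(|x|,h(x))$ explicitly for $n=3$, $(\alpha,\beta)=(1,\tfrac32)$. So your overall plan --- reduce the surface integral to a one-variable integral in $s=\langle\hat x,y\rangle$ with measure $\tfrac12\,ds$, differentiate the kernel in $r$ under the integral, and evaluate at the bang--bang extremizer supported on a cap centered at $\hat x$ --- is exactly the intended route, and your steps (1), (2) and (5) are sound (the extremizer is of cap type because $\partial_r P/P$ is increasing in $s$).

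The gap is in your determination of the cut-point. At a point $x$ with $|x|=r>0$, the condition that fixes the extremal cap is the \emph{weighted} constraint $P_{1,3/2}[h^*](x)=a$, namely
\[
\frac12\int_{-1}^{1}\frac{1-r^2}{(1-2rs+r^2)^{3/2}}\left(\mathds 1_{\{s>s_0\}}-\mathds 1_{\{s<s_0\}}\right)ds=a,
\]
not the unweighted area condition $A_3^{cap}(\gamma_a)=\tfrac{1+a}{2}$. Carrying out this integral gives $\sqrt{1-2rs_0+r^2}=\frac{1-r^2}{1-ra}$, so $s_0=-a$ only in the limit $r\to 0$. This matters twice over. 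First, the cap with $s_0=-a$ does not satisfy $u(x)=a$ when $r>0$, so it is not admissible for the constrained extremal problem whose value is $R_{1,3/2}(r,a)$. Second, the cancellation you flag as the main obstacle in steps (3)--(4) happens precisely because $\sqrt{1-2rs_0+r^2}$ equals the \emph{rational} expression $\frac{1-r^2}{1-ra}$; with $s_0=-a$ you would be left with an uncancelled $\sqrt{1+2ra+r^2}$ and the numerators $\pm 3-ra$ would not emerge. (With the correct $s_0$, the evaluated antiderivative collapses to $\frac{(3-ra)(1-a^2)}{2(1-r^2)}$ for the upper bound, and the lower bound follows by $a\mapsto -a$.) Relatedly, the identity $(\sin\gamma_a)^{2}=1-a^2$ from Remark (i) plays no role here; the factor $1-a^2$ comes out of the algebra with the weighted cut-point, not from the cap-area relation at the origin.
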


Hence
\be
|\nabla u (x)|\leq 2  \frac{1-u^2(x)}{1-|x|^2},
\ee

and therefore

\be
d_{h_2}(u(x_1),u(x_2))\leq 2  d_{h_3}(x_1,x_2), \quad   x_1,x_2\in  \mathbb{B}^3. 
\ee

We should mention that, recently, several versions of Schwarz lemma for harmonic and pluriharmonic
mappings were established, see \cite{kalaj,Che13,Mat18,CH2020,DCP2010,Zhu2019}.\\
 
We close our paper, by the following question as an adjustment to Liu's conjecture.

\bques
 Let $n\geq 3$ and  $h:\B^n \to (-1,1)$  be  a harmonic function. Is it true that
$$
|\nabla h (x)|\leq \frac{n}{2}  \frac{1-|h(x)|^2}{1-|x|^2}, \quad x\in\B^n ?
$$
\eques

\section{Proofs of the main results}
\subsection*{Proof of Theorem \ref{thm12}} 
To simplify notations, let us denote 
$$A=A_n^{cap}(\gamma).$$
Using spherical coordinates, the formula for the normalized  area of the spherical cap with contact angle $\gamma\in[0,\pi]$  is given by
\be\label{Azero}
A= \sigma_*(n)A_0(\gamma),
\ee
where  
$$A_0(\gamma)=\int_0^\gamma (\sin \theta)^{n-2} d\theta,$$  see for example \cite{ahl3}.

\begin{lem}Using the notations of Theorem \ref{KMM1}, for $\gamma\in (0,\pi)$, we have
\be
D_n(\gamma,\beta)= \beta C_n(\gamma) (1-|h(0)|^2),
\ee
where 
\be\label{Cn}
C_n(\gamma)=\dfrac{1}{(n-1)}\dfrac{(\sin \gamma)^{n-1}}{A_0(\gamma) (1-A_n^{cap}(\gamma))},
\ee

\noindent and
$$ C_n(\gamma) \to 1 \mbox { as } \gamma \to 0^+.$$
\end{lem}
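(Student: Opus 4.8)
The plan is to rewrite the quantity $D_n(\gamma,\beta)$ from (\ref{estgrad}) entirely in terms of $\gamma$ and $h(0)$, by eliminating $h(0)$ through the constraint (\ref{constraint}) together with the spherical-cap formula (\ref{Azero}). Writing $A=A_n^{cap}(\gamma)$, the constraint $A=\tfrac{1+h(0)}{2}$ gives $h(0)=2A-1$, hence
\[
1-|h(0)|^2=(1-h(0))(1+h(0))=4A(1-A).
\]
Substituting $A=\sigma_*(n)\,A_0(\gamma)$ from (\ref{Azero}) into the first factor only, this becomes
\[
1-|h(0)|^2=4\,\sigma_*(n)\,A_0(\gamma)\,\bigl(1-A_n^{cap}(\gamma)\bigr).
\]

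Next I would insert this expression into $\beta\,C_n(\gamma)\,(1-|h(0)|^2)$ with $C_n(\gamma)$ as defined in (\ref{Cn}); the common factor $A_0(\gamma)\bigl(1-A_n^{cap}(\gamma)\bigr)$ cancels, leaving $\dfrac{4\beta\,\sigma_*(n)}{n-1}(\sin\gamma)^{n-1}$. Comparing with $D_n(\gamma,\beta)=\dfrac{4\beta}{n}\,\omega_*(n)\,(\sin\gamma)^{n-1}$, the claimed identity $D_n(\gamma,\beta)=\beta\,C_n(\gamma)\,(1-|h(0)|^2)$ reduces to $n\,\sigma_*(n)=(n-1)\,\omega_*(n)$, which is immediate from $\sigma_k=k\omega_k$, since $\sigma_*(n)=\frac{\sigma_{n-1}}{\sigma_n}=\frac{(n-1)\omega_{n-1}}{n\,\omega_n}=\frac{n-1}{n}\,\omega_*(n)$.

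For the limiting behaviour, note that $A_n^{cap}(\gamma)\to 0$ as $\gamma\to 0^+$, so the factor $1-A_n^{cap}(\gamma)\to 1$, and it suffices to show $\dfrac{(\sin\gamma)^{n-1}}{A_0(\gamma)}\to n-1$. Since $A_0(0)=0$ and $A_0'(\gamma)=(\sin\gamma)^{n-2}$, L'H\^opital's rule yields
\[
\lim_{\gamma\to 0^+}\frac{(\sin\gamma)^{n-1}}{A_0(\gamma)}
=\lim_{\gamma\to 0^+}\frac{(n-1)(\sin\gamma)^{n-2}\cos\gamma}{(\sin\gamma)^{n-2}}=n-1,
\]
so $C_n(\gamma)\to\frac{1}{n-1}\cdot(n-1)\cdot 1=1$. (Equivalently, $(\sin\gamma)^{n-1}\sim\gamma^{n-1}$ and $A_0(\gamma)\sim\gamma^{n-1}/(n-1)$ as $\gamma\to0^+$.)

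I do not expect a genuine obstacle: the argument is a short, self-contained computation. The only point requiring care is the bookkeeping of the normalizing constants — keeping $\omega_*(n)$, $\sigma_*(n)$ and the normalized cap volume $A_n^{cap}(\gamma)$ straight, and combining (\ref{constraint}) with (\ref{Azero}) consistently so that the cancellation of $A_0(\gamma)\bigl(1-A_n^{cap}(\gamma)\bigr)$ is exact.
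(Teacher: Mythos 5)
Your proposal is correct and follows essentially the same route as the paper's proof: rewrite $1-|h(0)|^2=4A(1-A)$ via the constraint, substitute $A=\sigma_*(n)A_0(\gamma)$, use $\sigma_*(n)=\frac{n-1}{n}\omega_*(n)$ to match the constants, and apply L'H\^opital's rule for the limit. The bookkeeping of the normalizing constants is exactly right, so there is nothing to add.
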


\begin{proof} 
By the constraint condition (\ref{constraint}), we have  $1+h(0)=2A$ and  $1-h(0)=2(1-A)$. Therefore  
\be \label{hzero}
1-|h(0)|^2=4 A (1-A).
\ee

Recall that $D_n(\beta,\gamma)=\dfrac{4\beta \omega_*(n)}{n} (\sin\gamma)^{n-1}=\dfrac{4\beta \sigma_*(n)}{n-1} (\sin\gamma)^{n-1}$.

By (\ref{hzero}), we obtain 
$D_n(\beta,\gamma)=\dfrac{\beta}{n-1}\dfrac{\sigma_*(n) (\sin\gamma)^{n-1} }{A(1-A)} (1-|h(0)|^2).$
Therefore, by (\ref{Azero}), we get  (\ref{Cn}). The limit of $C_n(\gamma)$ as $\gamma$ goes to  $0$ equals to $1$ is a direct consequence of  L'Ho\^pital's rule.\\
\end{proof}

First, we disprove Conjecture \ref{ConL}. Assume that  the inequality (\ref{liuconj}) is true. Then,  in particular  for $x=0$, we have 
\be
|\nabla u (0)|\leq  2 \omega_*(n) (1-a^2), 
\ee
where  $a=u(0)$.
Since the estimate (\ref{estgrad}) in Theorem \ref{KMM1} under the constrain  $a=u(0)$ is sharp, then there is  extremal function $u^0$,  such that $D_n(\gamma,\beta) =|\nabla u^0 (0)|$.

Thus
\be
D_n(\gamma,\beta)=\beta C_n(\gamma) (1-a^2) \leq2 \omega_*(n) (1-a^2).
\ee
Therefore $\beta C_n(\gamma) \leq 2  \omega_*(n)$. In particular, in harmonic case where  $\beta=n/2$, we have
$$
n  C_n(\gamma) \leq 4  \omega_*(n).
$$
 As the limit of $C_n(\gamma)$ is $1$ as  $\gamma \to 0$, it yields
 \be\label{eq123}
 n   \leq 4  \omega_*(n).
 \ee
 Since   $\omega_n$ assumes its maximal value when $n=5$, $\omega_*(5)<1$, we disprove  Liu's conjecture.\\

We leave the reader  to disprove  Conjecture \ref{ConL} for $n\geq 4$ using   Alzer's estimate below.

\subsection{Proof of Theorem \ref{inq1}}\hfill
A remarkable upper and lower bounds for the ratio $\frac{\omega_{n-1}}{\omega_n}$ are proved by Borgwardt[7, p.253].  He showed that for $n\geq 2$
\begin{equation}\label{ineqbor}
	\sqrt{\frac{n}{2\pi}} \leq \omega_*(n) \leq \sqrt{\frac{n+1}{2\pi}}.
\end{equation}

\noindent More refinements of these estimates are established by  Alzer \cite{Alz}. 

\begin{thm}\cite{Alz} For $n\geq 2$, we have 
\be\label{eq125}
\sqrt{\frac{n+A}{2\pi}} \leq \omega_*(n) \leq \sqrt{\frac{n+B}{2\pi}},
\ee
with the best possible constants
$$A=\frac{1}{2} \mbox{ and } B=\frac{\pi}{2}-1. $$
\end{thm}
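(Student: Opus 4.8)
The plan is to reduce the whole statement to a single one‑variable inequality for a ratio of Gamma functions, and then to a monotonicity assertion. Recall that $\omega_n=V(\B^n)=\pi^{n/2}/\Gamma(\frac n2+1)$, so that
\[
\omega_*(n)=\frac{\omega_{n-1}}{\omega_n}=\frac{1}{\sqrt{\pi}}\,\frac{\Gamma(\frac{n}{2}+1)}{\Gamma(\frac{n+1}{2})}.
\]
Writing $y=n/2$ and $R(y)=\Gamma(y+1)/\Gamma(y+\frac12)$, one has $\pi\,\omega_*(n)^2=R(y)^2$, and the asserted double inequality $\sqrt{(n+A)/(2\pi)}\le\omega_*(n)\le\sqrt{(n+B)/(2\pi)}$ is \emph{exactly} equivalent to
\[
\frac{A}{2}\le F(y)\le\frac{B}{2},\qquad F(y):=R(y)^2-y .
\]
Since $F(\frac12)=\Gamma(\frac32)^2-\frac12=\frac{\pi}{4}-\frac12$ (the value corresponding to $n=1$) and, by Stirling, $\lim_{y\to\infty}F(y)=\frac14$, the target constants $\frac A2=\frac14$ and $\frac B2=\frac\pi4-\frac12$ are forced: the theorem amounts to showing that $F$ takes values in $[\frac14,\frac\pi4-\frac12]$ on its natural domain $y\ge\frac12$ (i.e.\ $n\ge1$), with both endpoints sharp.

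I would obtain the two sharp bounds from a single statement: $F$ is strictly decreasing on $[\frac12,\infty)$. Granting this, the upper bound reads $F(y)\le F(\frac12)=\frac\pi4-\frac12$, with equality only at $y=\frac12$, so $B=\frac\pi2-1$ is best possible (realized at $n=1$); the lower bound reads $F(y)>\lim_{y\to\infty}F(y)=\frac14$, approached but not attained, so $A=\frac12$ is best possible; over the quoted range $n\ge2$ both inequalities then hold strictly. Note that the lower inequality $F(y)>\frac14$, i.e.\ $R(y)^2>y+\frac14$, is precisely the classical Kershaw lower bound for $\Gamma(y+1)/\Gamma(y+\frac12)$, so that half is available off the shelf; the genuine content is the sharp \emph{upper} bound together with the monotonicity.

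For the monotonicity I would differentiate. With $\psi=\Gamma'/\Gamma$ the digamma function,
\[
F'(y)=2R(y)^2\bigl(\psi(y+1)-\psi(y+\tfrac12)\bigr)-1,
\]
and, using the absolutely convergent representation $\psi(y+1)-\psi(y+\frac12)=\sum_{k\ge0}\frac{1/2}{(y+k+\frac12)(y+k+1)}>0$, it suffices to prove $2R(y)^2\bigl(\psi(y+1)-\psi(y+\frac12)\bigr)<1$ for $y\ge\frac12$. A convenient exact handle is the Wallis representation $1/R(y)=\frac{2}{\sqrt{\pi}}\int_0^{\pi/2}\sin^{2y}\theta\,d\theta=:\frac{2}{\sqrt{\pi}}\,W(y)$, which turns $F$ into $F(y)=\frac{\pi}{4W(y)^2}-y$ and reduces $F'(y)<0$ to the integral inequality $-W'(y)/W(y)<2W(y)^2/\pi$. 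Here $W$ is positive, decreasing, and log‑convex in $y$ (a moment $\int(\sin^2\theta)^y\,d\theta$), and $-W'(y)=\int_0^{\pi/2}\sin^{2y}\theta\,\log(1/\sin^2\theta)\,d\theta>0$.

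The \emph{main obstacle} is precisely this last inequality. The leading large‑$y$ asymptotics cancel: $F(y)=\frac14+O(1/y^2)$, so that $2R(y)^2(\psi(y+1)-\psi(y+\frac12))=1+o(1/y)$, and crude one‑term estimates of $R^2$ and of the digamma difference cannot decide the sign of $F'$; one needs either second‑order Gautschi–Kershaw type estimates or a genuinely exact argument. I would therefore push the Wallis route, establishing $-W'/W<2W^2/\pi$ by combining the log‑convexity of $W$ (equivalently a Turán‑type inequality $W(y)\,W(y+1)\ge W(y+\frac12)^2$) with an elementary bound on the logarithmic weight $\log(1/\sin^2\theta)$ against $\sin^2\theta$; a separate check at the boundary $y=\frac12$ then confirms that the maximum of $F$ sits exactly at the endpoint and pins down $B=\frac\pi2-1$ as best possible. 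An alternative, if the direct estimate proves stubborn, is to prove the stronger statement that $F$ is completely monotonic via the integral representations of $\log\Gamma$ and $\psi'$, from which strict monotonicity is immediate.
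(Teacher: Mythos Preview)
The paper does not prove this theorem at all: it is quoted verbatim from Alzer \cite{Alz} and used as a black box (in fact only the coarser Borgwardt bounds \eqref{ineqbor} are ever invoked, to derive the short lemma \eqref{eq111}). So there is no ``paper's proof'' to compare against; your proposal is being measured on its own merits.

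Your reduction is clean and correct: with $y=n/2$ and $R(y)=\Gamma(y+1)/\Gamma(y+\tfrac12)$, the double inequality is exactly $\tfrac14\le F(y):=R(y)^2-y\le \tfrac\pi4-\tfrac12$, and the endpoint values $F(\tfrac12)=\tfrac\pi4-\tfrac12$, $\lim_{y\to\infty}F(y)=\tfrac14$ are computed correctly. Reducing everything to the strict monotonicity of $F$ on $[\tfrac12,\infty)$ is exactly the right move and matches how Alzer-type results are typically proved.

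The genuine gap is that you do not actually prove $F'<0$. You correctly identify that first-order asymptotics cancel and that crude bounds fail, but the proposed fixes remain programmatic: ``combine log-convexity of $W$ with an elementary bound on $\log(1/\sin^2\theta)$'' is not an argument, and it is not clear that the Tur\'an inequality $W(y)W(y+1)\ge W(y+\tfrac12)^2$ alone is strong enough to yield $-W'/W<2W^2/\pi$, since the desired inequality is a first-derivative statement while Tur\'an controls a discrete second difference. Likewise, ``prove $F$ is completely monotonic via integral representations'' names a strategy but supplies no computation. To close the gap you need a concrete second-order estimate: either Kershaw's sharp two-sided bound $\bigl(y+\tfrac14\bigr)^{1/2}<R(y)<\bigl(y+\tfrac14+\tfrac{1}{16y+\cdots}\bigr)^{1/2}$, or a direct sign analysis of $F'(y)=2R(y)^2\bigl(\psi(y+1)-\psi(y+\tfrac12)\bigr)-1$ using the Binet/Gauss integral for $\psi$, carried through explicitly.

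One peripheral point you flag but should state more sharply: the equality case for $B=\tfrac\pi2-1$ occurs at $n=1$, outside the quoted range $n\ge2$. Over $n\ge2$ the supremum of $F$ is $F(1)=4/\pi-1$, so the literally best constant there would be $B=8/\pi-2$, not $\pi/2-1$. This is a wrinkle in how the paper transcribed Alzer's result, not a flaw in your argument, but it means ``best possible'' in the displayed statement should be read as best possible over all $n\ge1$.
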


For more properties of the volume of the unit ball in $\R^n$, see \cite{Alz2,Mortici}.  \\

As $\sigma_*(n)=\frac{n-1}{n}\, \omega_*(n)$ and using (\ref{ineqbor}) or (\ref{eq125}), one can easily check the following lemma.

\begin{lem} For $n \geq 4$, we have
	\be\label{eq111}
	 \frac{1}{2}< \sqrt{\frac{n-1}{8}}< \sigma_*(n) < \frac{n-1}{4}.
	 \ee
\end{lem}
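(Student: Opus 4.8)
The plan is to prove the chain of inequalities in \eqref{eq111} directly from the identity $\sigma_*(n)=\frac{n-1}{n}\,\omega_*(n)$ together with the two-sided bounds on $\omega_*(n)$ recorded in \eqref{ineqbor} (Borgwardt) or sharpened in \eqref{eq125} (Alzer). Writing $\sigma_*(n)=\frac{n-1}{n}\omega_*(n)$, the Borgwardt lower bound gives
$$\sigma_*(n)\ge \frac{n-1}{n}\sqrt{\frac{n}{2\pi}}=\frac{n-1}{\sqrt{2\pi n}},$$
and the upper bound gives
$$\sigma_*(n)\le \frac{n-1}{n}\sqrt{\frac{n+1}{2\pi}}=\frac{n-1}{n}\sqrt{\frac{n+1}{2\pi}}.$$
So the proof reduces to three elementary one-variable estimates valid for $n\ge 4$: first $\frac{n-1}{\sqrt{2\pi n}}>\sqrt{\frac{n-1}{8}}$, equivalently (squaring) $\frac{(n-1)^2}{2\pi n}>\frac{n-1}{8}$, i.e. $8(n-1)>2\pi n$, i.e. $(8-2\pi)n>8$, i.e. $n>\frac{8}{8-2\pi}\approx 4.78$ — wait, this needs care at $n=4$, so I would instead verify $n=4,5$ by hand and use the asymptotic argument for $n\ge 6$, or better, replace the Borgwardt bound by Alzer's sharper $\sqrt{\frac{n+1/2}{2\pi}}$ at the relevant place. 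Second, $\sqrt{\frac{n-1}{8}}>\frac12$ is just $n-1>2$, i.e. $n>3$, immediate. Third, $\frac{n-1}{n}\sqrt{\frac{n+1}{2\pi}}<\frac{n-1}{4}$, equivalently $\frac{1}{n}\sqrt{\frac{n+1}{2\pi}}<\frac14$, i.e. $\frac{16(n+1)}{2\pi}<n^2$, i.e. $8(n+1)<\pi n^2$, which holds easily for all $n\ge 4$ (at $n=4$: $40<16\pi\approx 50.3$).

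The clean way to organize it: I would state at the outset that by $\sigma_*(n)=\frac{n-1}{n}\omega_*(n)$ and \eqref{eq125} one has
$$\frac{n-1}{n}\sqrt{\frac{n+\tfrac12}{2\pi}}\ \le\ \sigma_*(n)\ \le\ \frac{n-1}{n}\sqrt{\frac{n+\tfrac\pi2-1}{2\pi}},$$
and then verify the three numerical inequalities
$$\frac{n-1}{n}\sqrt{\frac{n+\tfrac12}{2\pi}}\ge\sqrt{\frac{n-1}{8}},\qquad \sqrt{\frac{n-1}{8}}>\frac12,\qquad \frac{n-1}{n}\sqrt{\frac{n+\tfrac\pi2-1}{2\pi}}<\frac{n-1}{4}$$
for $n\ge 4$. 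Squaring and clearing denominators turns each into a polynomial inequality in $n$: the first becomes $8(n-1)(n+\tfrac12)\ge 2\pi n^2$, i.e. $(8-2\pi)n^2-4n-4\ge 0$; since $8-2\pi\approx1.717>0$ this is an upward parabola whose larger root is below $4$ (check: at $n=4$, $8\cdot3\cdot4.5-2\pi\cdot16=108-32\pi\approx 7.47>0$), so it holds for all $n\ge 4$. The third becomes $16(n+\tfrac\pi2-1)<2\pi n^2$, i.e. $2\pi n^2-16n-8\pi+16>0$, again an upward parabola positive at $n=4$ ($32\pi-64-8\pi+16=24\pi-48\approx 27.4>0$). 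The middle one is trivial.

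The only mild obstacle is the first inequality at the boundary value $n=4$: using the crude Borgwardt bound $\sqrt{n/2\pi}$ instead of Alzer's $\sqrt{(n+1/2)/2\pi}$, one gets $8(n-1)\ge 2\pi n$, i.e. $(8-2\pi)n\ge 8$, i.e. $n\ge 8/(8-2\pi)\approx 4.66$, which fails at $n=4$ (and $n=5$ is false too if one is not careful about strictness). So I would either (a) use Alzer's constant $A=\tfrac12$ as above, which makes the parabola $(8-2\pi)n^2-4n-4$ nonnegative already at $n=4$, or (b) keep Borgwardt's bound but check $n=4,5$ separately against the exact values $\omega_*(4)=\frac{\omega_3}{\omega_4}=\frac{4\pi/3}{\pi^2/2}=\frac{8}{3\pi}$ and $\omega_*(5)=\frac{\omega_4}{\omega_5}=\frac{\pi^2/2}{8\pi^2/15}=\frac{15}{16}$, giving $\sigma_*(4)=\frac34\cdot\frac{8}{3\pi}=\frac{2}{\pi}\approx 0.637>\sqrt{3/8}\approx 0.612$ and $\sigma_*(5)=\frac45\cdot\frac{15}{16}=\frac34=0.75>\sqrt{4/8}\approx 0.707$, then invoke the asymptotic argument for $n\ge 6$. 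I would go with route (a) for brevity. Strictness of all inequalities for $n\ge4$ follows since every polynomial inequality above is strict.
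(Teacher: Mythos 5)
Your proposal is correct and follows exactly the route the paper intends: the paper's "proof" consists of the single remark that the lemma follows from $\sigma_*(n)=\frac{n-1}{n}\,\omega_*(n)$ together with \eqref{ineqbor} or \eqref{eq125}, and you have supplied the omitted verifications, correctly observing that Alzer's sharper lower bound (or a direct check at small $n$) is genuinely needed since Borgwardt's $\sqrt{n/2\pi}$ alone fails at $n=4$. The only quibble is your parenthetical claim that $n=5$ also fails under Borgwardt's bound: $(8-2\pi)\cdot 5\approx 8.58>8$, so it actually succeeds there, but this does not affect your argument since you take route (a).
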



\noindent Recall that the area of the spherical cap of contact angle
$\gamma \in [0,\pi]$ is given by

\begin{equation}
	A(\gamma)= \sigma_*(n) \int_0^\gamma \sin^{n-2} \theta d\theta.
\end{equation}

Let $a\in [-1,1]$, then there exists a unique angle $\gamma(a)\in[0,\pi]$ such that
\begin{equation}\label{eq124}
	A(\gamma(a))=\frac{1+a}{2}.
\end{equation}

Clearly, the mapping $a \mapsto \gamma(a)$ is strictly  increasing from $[-1,1]$ to $[0,\pi]$.\\

Differentiating  the equation (\ref{eq124}) with respect to $a$, we obtain
\begin{equation}\label{2.12}
	\sigma_*(n) \gamma'(a) \sin^{n-2} \gamma(a) =\frac{1}{2}, \quad \mbox{ for } a\in (-1,1).
\end{equation}

Let us consider the function
$h$ defined on $[-1,1]$ by
\begin{equation}
	h(a):=\sin^{n-1} \gamma(a)-1+a^2.
\end{equation}

As  $h$ is even, it is enough to study the function $h$ on $[0,1].$\\

For each $a\in [0,1)$, we have 
$$
	h'(a)=(n-1) \gamma'(a) \sin^{n-2}\gamma(a)\cos \gamma(a)+2a.
$$
In view of the equation (\ref{2.12}), we get

$$
	h'(a)=\frac{(n-1)\cos \gamma(a)}{2\sigma_*(n)}+2a.
$$
The second derivative of $h$ is given by
$$
	h''(a)=-\frac{n-1}{2\sigma_*(n)}\sin \gamma(a) \gamma'(a)+2.
$$
Again using (\ref{2.12}), we deduce that

$$
	h''(a)=-\frac{n-1}{4\sigma_*(n)^2}\sin^{3-n} \gamma(a) +2.
$$

For $n\geq 4$, we conclude that $h''$ is strictly decreasing on $[0,1]$
because $\gamma$ is increasing with values in $[\pi/2,\pi]$. 
Moreover, we have
\begin{equation}
	h''(0)=-\frac{n-1}{4\sigma_*(n)^2}+2,  \mbox{ and }\quad \lim_{a\to 1}h''(a)=-\infty.
\end{equation}

Using the inequality (\ref{eq111}), it yields
\begin{equation}
	h''(0) >0, \mbox{ for } n\geq 4.
\end{equation}
Therefore, there exists $a_n\in (0,1)$ such that $h''(a)>0$ on $(0,a_n)$ and $h''(a)<0$ on $(a_n,1)$.
Thus $h'$ is increasing on  $[0,a_n)$ and decreasing on $(a_n,1)$.
Moreover, 
$$h'(0)=0 \mbox{ and } h'(1)=-\frac{n-1}{2\sigma_*(n)}+2.$$

Now, using (\ref{eq111}), we deduce that
$$h'(1)<0.$$

Therefore there exists $b_n\in(0,1)$ such that $h$ is increasing on $(0,b_n)$ and decreasing on $(b_n,1)$.
As $h(0)=h(1)=0$, we conclude  that the $\min_{a\in [0,1]} h(a)=h(0)=0$. Finally, we get 
$
h(a) \geq 0 \mbox{ for all } a\in [0,1]
$, that is,   
$\sin^{n-1} \gamma(a) \geq 1-a^2,$ for $n\geq 4$. \\

\noindent Next, in order to prove the second estimate in Theorem \ref{inq1}, we  need the following lemma.

\begin{lem}\label{lem3.3}
	Let $a\in [0,1]$ and $n\geq 4$. Then
	$$-\cos \gamma (a) \leq a,$$
	Moreover, the equality holds at $a=0$ or $a=1$.
\end{lem}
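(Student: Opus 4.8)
\textbf{Proof proposal for Lemma \ref{lem3.3}.}

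The plan is to reduce the claimed inequality $-\cos\gamma(a)\le a$ to a clean inequality between the normalized spherical cap area $A(\gamma)$ and an explicit elementary function of $\gamma$, then verify that auxiliary inequality directly. Since $A(\gamma(a))=\tfrac{1+a}{2}$, we have $a = 2A(\gamma(a))-1$, so the assertion $-\cos\gamma(a)\le a$ is equivalent to
\[
1-\cos\gamma(a) \le 2A(\gamma(a)) = 2\sigma_*(n)\int_0^{\gamma(a)}\sin^{n-2}\theta\,d\theta .
\]
As $a$ ranges over $[0,1]$, the angle $\gamma(a)$ ranges over $[\pi/2,\pi]$, so it suffices to prove
\[
g(\gamma):=2\sigma_*(n)\int_0^{\gamma}\sin^{n-2}\theta\,d\theta - (1-\cos\gamma)\ \ge\ 0,\qquad \gamma\in[\pi/2,\pi],
\]
with equality exactly at the endpoints $\gamma=\pi/2$ (which corresponds to $a=0$, using $A(\pi/2)=1/2$) and $\gamma=\pi$ (which corresponds to $a=1$, using $A(\pi)=1$, i.e. $2\sigma_*(n)\int_0^\pi\sin^{n-2}\theta\,d\theta = 2$).

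First I would record the two endpoint identities. At $\gamma=\pi/2$: $g(\pi/2) = 2\sigma_*(n)\int_0^{\pi/2}\sin^{n-2}\theta\,d\theta - 1 = 2A(\pi/2)-1 = 0$. At $\gamma=\pi$: $g(\pi) = 2A(\pi) - 2 = 0$. Next I would differentiate: $g'(\gamma) = 2\sigma_*(n)\sin^{n-2}\gamma - \sin\gamma = \sin\gamma\bigl(2\sigma_*(n)\sin^{n-3}\gamma - 1\bigr)$. On $(\pi/2,\pi)$ we have $\sin\gamma>0$, so the sign of $g'$ is the sign of $\phi(\gamma):=2\sigma_*(n)\sin^{n-3}\gamma-1$. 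Since $\sin^{n-3}\gamma$ is strictly decreasing on $[\pi/2,\pi)$ for $n\ge 4$ (and for $n=4$ it is $\sin\gamma$, still decreasing there; for $n=3$ it is constant, which is the degenerate case noted in the remark and handled separately via $\sigma_*(3)$ directly), $\phi$ is strictly decreasing, hence $g'$ changes sign at most once on the interval: $g$ is first increasing, then decreasing (a single interior maximum), provided $\phi(\pi/2)>0$, i.e. $2\sigma_*(n)>1$. That last inequality is exactly the left-hand bound $\tfrac12<\sqrt{(n-1)/8}<\sigma_*(n)$ from \eqref{eq111}. With $g$ unimodal (up then down) on $[\pi/2,\pi]$ and $g(\pi/2)=g(\pi)=0$, it follows that $g\ge 0$ throughout, with equality only at the endpoints. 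Translating back, $-\cos\gamma(a)\le a$ on $[0,1]$ with equality exactly at $a=0,1$.

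The main obstacle is making sure the sign analysis of $g'$ on $[\pi/2,\pi]$ is airtight: one must check that $\phi(\pi/2) = 2\sigma_*(n)-1>0$ (which is \eqref{eq111}) so that $g$ genuinely starts by increasing, and confirm that $\sin^{n-3}\gamma$ is monotone decreasing on that interval so $\phi$ has at most one zero — for $n\ge 4$ this is immediate, and the borderline value $n=3$ should simply be acknowledged as the case where $(\sin\gamma_a)^{n-1}=1-a^2$ exactly (consistent with the earlier remark), so the inequality $-\cos\gamma(a)\le a$ there reduces to $1-\cos\gamma\le 2\sigma_*(3)\gamma$ with $\sigma_*(3)=2/3$, which is elementary. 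Everything else is a routine application of the endpoint values and the unimodality argument; no delicate estimate beyond the already-established bound $\sigma_*(n)>1/2$ is needed.
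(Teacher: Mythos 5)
Your proposal is correct and is essentially the paper's own argument rewritten in the variable $\gamma$ rather than $a$: both reduce the claim to a difference function that vanishes at the two endpoints and whose derivative is positive then negative, the single sign change being guaranteed by the monotonicity of $\sin^{n-3}\gamma$ on $[\pi/2,\pi]$ together with the bound $\sigma_*(n)>\tfrac12$ from (\ref{eq111}). (Your parenthetical aside on $n=3$ misstates the constant --- $\sigma_*(3)=\tfrac12$, not $\tfrac23$, and $\int_0^\gamma\sin\theta\,d\theta=1-\cos\gamma$, so $g\equiv 0$ there --- but that case lies outside the lemma's hypothesis and does not affect the proof.)
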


\begin{proof}Consider the function $G$ on $[0,1]$ defined by 
$$G(a)=a+\cos \gamma(a). $$
 
$G$ is differentiable on $(0,1)$ and
$$ G'(a)=1-\sin \gamma(a) \gamma'(a)=1- \frac{1}{2\sigma_*(n) }\sin^{3-n} \gamma(a).$$

Thus, the function $G'$ is strictly decreasing as $\gamma(a)$ belongs to $[\pi/2,\pi]$. Moreover, we have
$$G'(0)= 1- \frac{1}{2\sigma_*(n)}>0 \mbox{ and} \lim_{a\to 1}G'(a)=-\infty.$$

Therefore, $G$ is strictly increasing on $[0,c_n]$ and decreasing on $(c_n,1)$, where
  $c_n\in (0,1)$ is the unique zero of $G'$. As $G(0)=G(1)=0$,  we get   $G(a)>0$ on $(0,1)$.\\
\end{proof}

It remains  to prove  the following: $(\sin \gamma)^{n-1}\leq\frac{n-1}{4 \sigma_*(n)} (1-a^2)$,
$n\geq 4$.

Consider  $$g(a)=   1-a^2 - \frac{4 \sigma_*(n)}{n-1} \sin^{n-1}
\gamma(a).$$

Then  $$g'(a)= -2a -2\cos \gamma(a).$$

By  Lemma \ref{lem3.3}, we have  $g'(a)\leq 0$  and $g$ is decreasing. Hence
$g(a)\geq g(1)=0$
on $[0,1]$  and the conclusion follows.

\subsection*{Proof of Theorem \ref{thm2.4}} 
Theorem \ref{thm2.4} is a direct consequence of Theorem \ref{KMM1} and Theorem \ref{inq1}. In particular, in Corollary \ref{cor11}, we obtain the corresponding inequality for harmonic 
and hyperbolic harmonic functions, by considering the corresponding  values of $\beta$.

\subsection*{Proof of Theorem \ref{thm2.5} and \ref{thm2.6}} 
Let $h:\B^n \to (-1,1)$ be a harmonic function and let $x\in \B^n$. Consider $g$ the harmonic function defined on $\B^n$ by
$$g(y)=h(x+y(1-|x|)).$$ 
Clearly, we have $g(0)=h(x)$ and $|\nabla g(0)|=(1-|x|)|\nabla h(x)|$. By applying Corollary \ref{cor11} to the function $g$, we get the desired  inequality.\\

In the hyperbolic harmonic case, we compose with a M\"obius transformation sending $0$ to $x$. More precisely, let $x\in \B^n$ be fixed. By the M\"obius invariance of $\Delta_h$, the function $h\circ \varphi_x$ is also a bounded hyperbolic harmonic function, where
$$\varphi_x(y):= \frac{|y-x|^2 x-(1-|x|^2)(y-x)}{1-2\langle y,x\rangle+|y|^2|x|^2}, $$
which is a M\"obius transformation of $\B^n$. Theorem \ref{thm2.6} follows by replacing $h$ by $h\circ \varphi_x$ in Corollary \ref{cor11} and noting that $\nabla(h\circ \varphi_x)(0)=-(1-|x|^2)\nabla h(x)$, see \cite[p. 18]{stoll}.

\subsection{Proof of Theorem \ref{thm:Dhh}}
As the proofs for the harmonic and the hyperbolic harmonic case are similar, we will provide only the proof in the harmonic setting. Let $h:\B^n \to \B^m$ be a harmonic vector-function and $\theta$ be a unit vector in $\R^m$. Consider $h_\theta$ the function defined by  
$$h_\theta (x)=\langle h(x), \theta \rangle. $$
Then $h_\theta$ is a harmonic function with values in $(-1,1)$. Consequently, by Theorem \ref{thm2.5}, we get 
$$|\langle Dh(x)v,\theta \rangle|=|Dh_\theta(x)v| \leq \frac{n}{2}  \frac{1}{1-|x|}, \quad \mbox {for all } v\in \R^n,\, |v|=1.  $$
Therefore,  $|Dh(x)| \leq \frac{n}{2}  \frac{1}{1-|x|}.$

\subsection{Proof of Theorem \ref{thm:hh}}

Let $x_0\in \B^n$ and $h:\B^n \to \B^m$ be a harmonic function. By Theorem \ref{thm:Dhh}, we have to consider only the case where $h(x_0)\not =0$. Define 
$$g(x):=\langle h(x), \frac{h(x_0)}{|h(x_0)|} \rangle.$$
Then $g$ is a harmonic function on $\B^n$ with codomain $(-1,1)$ with $g(x_0)=|h(x_0)|.$ It follows from Theorem \ref{thm2.5}, that 
$$ 
|\nabla g (x_0)|\leq \frac{n}{2}  \frac{1-|g(x_0)|^2}{1-|x_0|}.
$$

\noindent Indeed,  easy computations show that $|\nabla g (x_0)|=|\nabla |h| (x_0)|$ as $ g_{x_i}(x_0)=|h|_{x_i}(x_0)$, where $g_{x_i}(x_0)$ denotes the partial derivative of $g$ with respect to the variable $x_i$ at $x_0$.



\begin{thebibliography}{1}

\bibitem {ahl3}  L. Ahlfors, {\it M\"{o}bius transformation in several
dimensions}, School of mathematics, University of Minnesota, 1981.

\bibitem{ah2} L. Ahlfors, {\it Lectures on Quasiconformal mappings}
with additional chapters by Earle and Kra,  Shishikura, Hubbard,
Univ. Lectures Series (Providence , R.I.), 2006.


\bibitem{Alz} H.  Alzer, {\it Inequalities
for the Volume of the Unit Ball in}  $\R^n$, Journal Math.Anal. and Application., 252 (2000), p.353-363.

\bibitem{Alz2} H. Alzer,  {\it Inequalities for the Volume of the Unit Ball in} $\R^n$, II. Med. J. M. 5, 395–413 (2008)

\bibitem{Axler1992}
	S.~Axler, P.~Bourdon, and W.~Ramey.
	\newblock {\em {Harmonic Function Theory}}, volume 137 of {\em Graduate Texts
		in Mathematics}.
	\newblock Springer New York, New York, NY, 1992.

\bibitem{Burgeth92}
	B.~{Burgeth}.
	\newblock {A\it  Schwarz lemma for harmonic and hyperbolic harmonic functions in
		higher dimensions.}
	\newblock { {Manuscr. Math.}}, 77(2-3):283--291, 1992.
	
	\bibitem{Burgeth1994}
	B.~Burgeth.
	\newblock {\it Schwarz Lemma Type Inequalities for Harmonic Functions in the Ball}.
	\newblock { Class. Mod. Potential Theory Appl.} (x):133--147, 1994.

\bibitem{Borg}   K. H. Borgwardt, The Simplex Method, Springer-Verlag, Berlin, 1987.	
\bibitem{kalaj} D. Kalaj, {\it Heinz-Schwarz inequalities for harmonic mappings in the unit ball}, Ann. Acad. Sci. Fenn. Math.
 Vol. {\bf 41} (2016) 457-464
 
\bibitem{Che13}
H. Chen,
\newblock {{\em The Schwarz-Pick lemma and Julia lemma for real planar harmonic mappings},
\newblock Sci. China Math., 56 (2013), 2327--2334.

\bibitem{Col89}
F. Colonna,
\newblock {\em The Bloch constant of bounded harmonic mappings},
\newblock { Indiana University Math. J.}, 38 (1989), 829-840.	
 
  \bibitem{kavu}    D. Kalaj and M.  Vuorinen, {\it  On harmonic functions and the Schwarz lemma},  Proc. Amer. Math. Soc. {\bf 140} (2012), no. 1, 161-165.


\bibitem{KhMaMh}	A. Khalfallah, M. Mateljevi\'c and  M. Mhamdi,  {\it Some properties of mappings   admitting  general Poisson representations},  accepted in   Med. J. Math, published on October 2021.

\bibitem{KhaHagg} A. Khalfallah,  F. Haggui  and  M. Mateljevi\'c,    {\it Khavinson conjecture for hyperbolic harmonic functions on the unit ball},
arXiv:2103.00638v1  [math.CV]  28 Feb 2021

\bibitem{Kha92}
D. Khavinson,
\newblock {\em An extremal problem for harmonic functions in the ball},
\newblock { Canadian Math. Bulletin}, 35(1992), 218--220.	

\bibitem{KM10a}
G. Kresin \and V. Maz'ya,
\newblock {\em Sharp pointwise estimates for directional derivatives of harmonic function in a multidimensional ball},
\newblock { J. Math. Sci.}, 169 (2010), 167--187.
	
\bibitem{KM10b}
G. Kresin \and V. Maz'ya,
\newblock {\em Optimal estimates for the gradient of harmonic functions in the multidimensional half-space},
\newblock { Discrete Contin. Dyn. Syst.} 28 (2010), 425--440.

\bibitem{KM12}
G. Kresin \and V. Maz'ya,
\newblock {\em Maximum Principles and Sharp Constants for Solutions of Elliptic and Parabolic Systems},
\newblock { Mathematical Surveys and Monographs} 183, American Mathematical Society, 2012.

\bibitem{Mar17}
M. Markovi\'c,
\newblock {\em }Solution to the Khavinson problem near the boundary of the unit ball},
\newblock { Constructive Approximation}, 45(2)(2017), 243--271.

\bibitem{Mat18}
M. Mateljevi\'c,
\newblock{ \em Schwarz lemma and Kobayashi Metrics for harmonic and holomorphic functions},
\newblock { J. Math. Anal. Appl.} 464(1)(2018), 78-100.

\bibitem{Matkhal} Mateljevi\'c, M., Khalfallah, A. 
\newblock{ \em On some Schwarz type inequalities.} 
J. Inequal. Appl. 2020, 164 (2020). 


\bibitem{Mel19}
P. Melentijevi\'c,
\newblock {\em A proof of the Khavinson conjecture in $\mathbb{R}^3$},
\newblock {Adv. Math.} 352 (2019), 1044--1065.

 \bibitem{Mortici} C. Mortici, {\it  Monotonicity properties of the volume of the unit ball in} $\R^n$ . Optim Lett 4, 457–464 (2010).
 
 
 \bibitem{pav} M. Pavlovi\'c,{\it  A Schwarz lemma for the modulus of a vector-valued analytic function}, Proc. Amer. Math. Soc. 139 (2011), 969–973.

\bibitem{liu}
C. Liu: {\it A proof of the Khavinson conjecture}. Math. Ann. 380, 719–732 (2021)


\bibitem{Liu2}   C.  Liu,    {\it  Schwarz–Pick Lemma for Harmonic Functions}, International Mathematics Research Notices, 2021, rnab158, https://doi.org/10.1093/imrn/rnab158



\bibitem{CH2020}   Sh. Chen, H. Hamada .:  {\it Some sharp Schwarz-Pick type estimates and their applications of harmonic and pluriharmonic functions}, J. Funct. Anal.,
Vol. 282 (1),
(2022),
109254

\bibitem{DCP2010}   Dai S Y, Chen H H, Pan Y F.: {\it The high order Schwarz-Pick lemma on complex Hilbert balls.} Sci China Math, 2010, 53, doi: 10.1007/s11425-010-0119-2

\bibitem{Zhu2019} Zhu, J.-F.
{\it 
Schwarz-Pick Type Estimates for Gradients of Pluriharmonic Mappings of the Unit Ball},
 Results  Math., 74 (2019) 1-16
 
 \bibitem{stoll} Stoll,  {\it M.Harmonic  and  Subharmonic  Function  Theory  on  the  Hyperbolic  Ball}, London  Mathematical Society Lecture Note Series, 431.  Cambridge University Press, Cambridge, 2016. 



\end{thebibliography}
\end{document}